\def \no{\nonumber}
\def\ve{\varepsilon}
\newcommand{\pa}{\partial}
\newcommand{\R}{\mathbb{R}}
\newcommand{\ud}{\mathrm{d}}
\newtheorem{remark}{Remark}[section]
\newtheorem{theorem}{Theorem}[section]
\newtheorem{lemma}[theorem]{Lemma}
\newtheorem*{claim}{Claim}
\newtheorem*{convention}{\textbf{Convention}}
\newtheorem{thm}{\protect\theoremname}
\providecommand{\theoremname}{Theorem}
\begin{document}
	
	\title[]{Estimates for modified $\sigma_{2}$ curvature equation on compact manifolds with boundary}

	\author{Xuezhang Chen}
	\address{Department of Mathematics \& IMS, Nanjing University, Nanjing 210093, P. R. China}
	\email{xuezhangchen@nju.edu.cn}
	\thanks{X.  Chen is partially supported by NSFC (No.12271244)}
	\author{Wei Wei}
	\address{Department of Mathematics, Nanjing University, Nanjing 210093, P. R. China}
	\email{wei\_wei@nju.edu.cn}
	\thanks{W. Wei  is partially supported by NSFC (No.12201288) , NSFJS (BK20220755), and the Alexander von Humboldt Foundation.}
	\begin{abstract}
		We  establish the global $C^2$-estimates for the modified $\sigma_2$ curvature equation with prescribed boundary mean curvature, and particularly, the local boundary $C^2$ estimates on three-manifolds.    
	\end{abstract}
	
	\maketitle
	
	\section{Introduction}
	Let $(M,g)$ be a smooth compact manifold of dimension $n \geq 3$ with boundary
	$\partial M$. A modified Schouten tensor was introduced in Gursky-Viaclovsky \cite{GV} 
	
	\[
	A_{g}^{t}=\frac{1}{(n-2)}(\mathrm{Ric}_{g}-\frac{t}{2(n-1)}R_{g}g),~~ t \in \R,
	\]
	where $\mathrm{Ric}_{g}$, $R_{g}$ are the Ricci and scalar curvature, respectively. Notice that $A_{g}^{0}$ is the Ricci curvature and $A_{g}^{1}$ is
	the Schouten tensor $A_{g}$. A modified $\sigma_{k}$ curvature is defined by 
	\[
	\sigma_{k}(g^{-1}A_{g}^{t}):=\sigma_{k}(\lambda(g^{-1}A_{g}^{t}))\quad\text{~~for~~}1\le k\le n,
	\]
	where $\lambda(g^{-1}A_{g}^{t})$ is the eigenvalue of $g^{-1}A_{g}^{t}$
	and $\sigma_{k}(\lambda)$ is the elementary symmetric function defined as
	\[
	\sigma_{k}(\lambda)=\sum_{1\le i_{1}<\cdots<i_{k}\le n}\lambda_{i_{1}}\cdots\lambda_{i_{k}}.
	\]

	Let $g_{u}=e^{-2u}g$, then
	\[
	A_{g_{u}}^{t}=A_{g}^{t}+\nabla^{2}u+\frac{1-t}{n-2}\Delta ug+\nabla u\otimes\nabla u-\frac{2-t}{2}|\nabla u|^{2}g
	\]
	and
	\[
	\sigma_{2}(g^{-1}A_{g_{u}}^{t})=\frac{1}{2}(|\mathrm{tr}_{g}A_{g_{u}}^{t}|^{2}-|A_{g_{u}}^{t}|_{g}^{2}).
	\]
	
		The well-known cone condition $\Gamma_k^+:=\{\lambda=(\lambda_1,\cdots, \lambda_n);  \sigma_1(\lambda)>0,\cdots, \sigma_k(\lambda)>0\}$ is introduced to ensure the ellipticity of the equation. 

	Let $\tilde g=g \big|_{T \pa M}$ be the induced metric and $\vec n$ be the inward unit normal on $\pa M$. We define the second fundamental form by $L=-\nabla \vec n \big|_{T\pa M}$ and its trace-free part by $\mathring{L}=L-h_g \tilde g$ on $\pa M$, where $h_g=-\mathrm{tr}_{\tilde g}(\nabla \vec n)/(n-1)$ is the mean curvature.

	For $t\le1$, we consider 
	\begin{equation}
		\begin{cases}
			\sigma_{2}^{\frac{1}{2}}(g^{-1}A_{g_{u}}^{t})=f(x,u), \quad \lambda(g^{-1}A_{g_{u}}^{t})\in \Gamma_2^+ &\text{in}\quad M,\\
			\frac{\partial u}{\partial \vec{n}}+h_{g}=c(x,u) &\text{on}\quad \partial M,
		\end{cases}\label{eq:general equation}
	\end{equation}
where $f(x,u)$ is a positive function, $c(x,u)$ is a non-negative function.

	Local $C^2$ estimates are the essential ingredient for the existence of fully nonlinear equation and  have been studied extensively.  Local $C^2$ estimates for equation \eqref{eq:general equation} on closed manifolds have been investigated by various authors for $t=1$. We refer to \cite{Guan-Wang0, LiYY, Wang, Chen2} etc. Compared to the perfect $C^2$ estimates for $\sigma_k$ curvature equation on closed manifolds, the boundary $C^2$ estimates are largely open, especially for the non-umbilic boundary.  Under assumption that $\partial M$ is umbilic and $(M, g)$ is locally conformally flat near $\partial M$,  Jin-Li-Li \cite{Jin-Li-Li} obtained local $C^2$ estimates and then the existence of $\sigma_k$ curvature equation. Using a different technique, S. Chen \cite{Chen3} obtained boundary local $C^2$ estimates for the $\sigma_2$ curvature equation on four-manifolds with umbilic boundary when $c=0$.  Jin \cite{Jin} obtained boundary local $C^2$ estimates for manifolds with totally geodesic boundary.  In \cite{CW}, the authors have obtained the local $C^2$ estimates for the $\sigma_2$ curvature equation on compact manifolds with general boundary for non-negative function $c$, the method of which is distinct from the proof in this paper. For $t\le 1$,  He-Sheng \cite{HS2013} established the $C^2$ estimates for modified $\sigma_k$ curvature equation on manifolds with totally geodesic boundary for $c=0$. See \cite{Duncan-Luc,SY1,SY2} for more information related with modified equation.

	We now investigate the global $C^{2}$ estimates for
	modified $\sigma_{2}$ curvature equation on manifolds with general boundary,  
	and the local $C^{2}$ estimates for $\sigma_{2}$ curvature
	on three-manifolds with boundary, contributing to  an existence result for the
	$\sigma_{2}$-Yamabe equation with vanishing mean curvature under certain assumption.

	\begin{thm}	\label{thm:global C2 estimate}
		 Suppose $f(x,z): M \times \R \to \R_+$ is a $C^2$ function and $c(x,z): \partial M \times \R \to \R$ is a nonnegative $C^3$ function. Let $u$ be a $C^{4}$ solution to equation (\ref{eq:general equation}) for $ t\le 1$.
		Then there holds 
		\[
		\sup_{M}|\nabla^{2}u|\le C,
		\]
		where $C$ is a positive constant depending on $g,c$,
		$|u|_{C^1(M)}$, $\sup_{M}(f(x,u)+|f_{x}|+|f_{z}|+|f_{xx}|+|f_{xz}|+|f_{zz}|)$
		and $\sup_{M}\frac{1}{f}.$ 
	\end{thm}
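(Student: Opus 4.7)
I follow the classical two-step strategy: first reduce the interior Hessian bound to a boundary Hessian bound via a global maximum-principle test function, and then estimate separately the tangent-tangent, tangent-normal, and normal-normal components of $\nabla^{2}u$ on $\partial M$. Denote by $F^{ij} = \partial \sigma_{2}^{1/2}(g^{-1}A^{t}_{g_{u}})/\partial (A^{t}_{g_{u}})_{ij}$ the linearized operator, which is elliptic and concave on $\Gamma_{2}^{+}$. The conformal change law for $A^{t}_{g_{u}}$ contains the term $\tfrac{1-t}{n-2}\Delta u\,g$ whose coefficient is nonnegative for $t\le 1$; this preserves a favorable sign in every differentiated form of the equation and will be used repeatedly.

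\textbf{Step 1: interior-to-boundary reduction.} I test the equation against
\[
W(x,\xi) = (\nabla^{2}u)(\xi,\xi)\,\exp\bigl(\alpha_{1}|\nabla u|^{2}+\alpha_{2}\psi(u)\bigr),\qquad \xi\in T_{x}M,~|\xi|=1.
\]
At an interior maximum, differentiate the equation twice in $\xi$, apply $F^{ij}$, and exploit the concavity of $\sigma_{2}^{1/2}$ to dominate the third-order terms. A suitable choice of $\psi,\alpha_{1},\alpha_{2}$ absorbs the cubic Hessian errors into the exponential weight and gives an upper bound on $W$ in terms of the data and $|u|_{C^{1}}$. Thus $\sup_{M}|\nabla^{2}u|$ is controlled by $\sup_{\partial M}|\nabla^{2}u|$ plus known quantities.

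\textbf{Step 2: boundary $C^{2}$ estimates.} Fix $x_{0}\in\partial M$ and use Fermi coordinates $\{\partial_{\alpha}\}_{\alpha=1}^{n-1}\cup\{\partial_{n}=\vec n\}$ near $x_{0}$.
(i) Tangent-tangent: differentiating the Neumann condition $u_{n}=c(x,u)-h_{g}$ twice tangentially expresses $u_{\alpha\beta}(x_{0})$ linearly in $u,\nabla u$ and the second fundamental form, yielding $|u_{\alpha\beta}(x_{0})|\le C$.
(ii) Tangent-normal: construct a barrier of the form $\Phi = \pm\bigl(u_{\alpha}-\overline{c_{\alpha}}\bigr)+A\rho+B\rho^{2}$, where $\rho$ is the Fermi distance to $\partial M$ and $\overline{c_{\alpha}}$ extends the tangential derivative of $c-h_{g}$ inward. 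A direct computation shows that $F^{ij}\nabla^{2}_{ij}\Phi+\tfrac{1-t}{n-2}F^{ii}\Delta\Phi<0$ in a tubular neighborhood of $x_{0}$, while $\Phi\ge 0$ on its parabolic boundary; the Hopf lemma then gives $|u_{\alpha n}(x_{0})|\le C$.
(iii) Double normal: diagonalize the tangential block of $A^{t}_{g_{u}}$ at $x_{0}$ and expand the equation as a quadratic polynomial in $u_{nn}$ whose coefficients are controlled by (i)--(ii); the cone condition $\lambda\in\Gamma_{2}^{+}$ forces the leading coefficient to be bounded away from zero and selects the correct root, giving $|u_{nn}(x_{0})|\le C$.

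\textbf{Main obstacle.} The crux is step (ii), the mixed tangent-normal estimate in the non-umbilic setting with general nonnegative $c$. The error contributions from $\mathring{L}\ne 0$ and from the dependence of $c$ on $(x,u)$ must be absorbed by the quadratic correction $B\rho^{2}$ in the barrier, and this absorption exploits critically the nonnegative sign of $\tfrac{1-t}{n-2}F^{ii}$ arising in the modified conformal formula. Implementing this uniformly across the entire range $t\le 1$ is the main technical novelty; once (ii) is in place, combining it with (i), (iii) and the reduction in Step 1 delivers the claimed bound on $\sup_{M}|\nabla^{2}u|$.
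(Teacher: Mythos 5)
Your proposal has the right overall architecture (interior reduction plus boundary estimates), but two of the three boundary steps rest on claims that are false for a Neumann problem, and these are precisely the places where the real work of the paper lives.

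First, Step 2(i) does not work. Differentiating the Neumann condition $u_{n}=c(x,u)-h_{g}$ \emph{once} tangentially gives $u_{\alpha n}=\partial_{\alpha}(c-h_{g})+L_{\alpha}^{\gamma}u_{\gamma}$, i.e.\ it controls the \emph{mixed} derivatives by the $C^{1}$ data; differentiating it \emph{twice} tangentially produces a third-order identity for $u_{\alpha\beta n}$ that involves $u_{nn}$ and $u_{\alpha\beta}$ but does not determine $u_{\alpha\beta}$. You have transplanted the Dirichlet-problem logic (where the trace of $u$ is prescribed, so tangential second derivatives are free and mixed ones need barriers) to the Neumann setting, where the situation is reversed. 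Consequently your Step 2(ii) barrier solves a non-problem, and the quantity you call the main obstacle is actually the easy part. In the paper the tangential second derivatives are estimated by applying a Hopf-lemma argument to a test function on the sphere bundle of the form $\eta e^{ad}\bigl[\nabla^{2}u(\zeta,\zeta)-2\langle\zeta,\nabla d\rangle(\langle\zeta^{\top},\nabla\varphi\rangle-\nabla^{2}d(\nabla u,\zeta^{\top}))+B\langle\nabla u,\nabla d\rangle+|\nabla u|^{2}\bigr]$, whose correction term is built exactly so that mixed directions cannot be maximal; the outcome is that every second derivative is bounded by $C+\max_{\partial M}u_{nn}$, not by $C$.

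Second, Step 2(iii) fails. Writing $W=A^{t}_{g_{u}}$, one has $\sigma_{2}(W)=W_{n}^{n}\sigma_{1}(W^{\top})-\sum_{\alpha}W_{n}^{\alpha}W_{\alpha}^{n}+\sigma_{2}(W^{\top})$, so the equation is \emph{affine} in $u_{nn}$, with leading coefficient $F_{n}^{n}=\sigma_{1}(W^{\top})$. The cone condition $\Gamma_{2}^{+}$ makes this coefficient positive but gives no uniform lower bound; indeed in the paper's own analysis $\sigma_{1}(W^{\top})$ can be of size $N_{1}^{2}/W_{n}^{n}$, i.e.\ it degenerates exactly when $u_{nn}$ is large, which is why the constant in the theorem must depend on $\sup_{M}1/f$. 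Moreover the remaining coefficients contain $W_{\alpha}^{\alpha}$, which by the reduction are only bounded in terms of $u_{nn}$ itself, so solving the equation for $u_{nn}$ is circular. The double normal estimate is the heart of the proof: the paper obtains it by a separate maximum-principle argument for the auxiliary function $\langle\nabla u,\nabla d\rangle-c+h_{g}+(\langle\nabla u,\nabla d\rangle-c+h_{g})^{7/5}-\tfrac{1}{2}Nd-N_{1}d_{g}^{2}(x_{0},x)$, with a case analysis on the sign and size of $\sigma_{2}(W^{\top})$ and two technical claims exploiting the algebraic structure of $\sigma_{2}$ and the nonnegativity of $c$. Nothing in your outline substitutes for this step, so the proposal as written does not prove the theorem.
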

	
	We emphasize that for $t<1$, the $C^2$ estimates hold regardless of  the sign of $c(x,u)$ and the dependence on $\sup_{M}\frac{1}{f}.$

	The proof of Theorem \ref{thm:global C2 estimate} follows the classical method developed by Lions-Trudinger-Urbas \cite{LTU} for the Monge-Ampere equation on a strictly convex Euclidean domain. This method consists of two key steps: First, reducing global estimates to double normal derivative on the boundary; second, deriving the double normal derivative on the boundary. Similar techniques had been applied by Ma-Qiu \cite{MQ} to the $k$-Hessian equation on strictly convex domains. See \cite{JT} for the global $C^2$ estimates. Most existing results heavily depend on the convexity of the boundary.

	The boundary $\sigma_k$ curvature equation is challenging compared with other fully nonlinear equations. The term $\sum_{i,j} F^i_j d_g(x,\partial M)^j_i$ can not contribute a good term to control other terms from  taking the first derivatives to the equation, making the computation of double normal derivative on $\pa M$ more complicated.
	To address the challenge, new auxiliary functions and certain geometric insights come into play, enabling the resolution of the intricate term above.  	$\vspace{4pt}$

	With Theorem \ref{thm:global C2 estimate}, we are able to prove the following boundary 
$\sigma_2$ eigenvalue problem. 
	\begin{thm}
		\label{theorem:sigma_2_1st_bdry_eigenvalue-1}  Suppose $\lambda(g^{-1}A^t_{g})\in\Gamma_{2}^{+}$ for $t\le 1$
		and $h_{g}\ge0$ on $\pa M$. Then for any $t\le 1$, there exist a unique constant
		$\lambda>0$ and $u\in C^{\infty}(\overline{M})$ such that $g_{u}=e^{-2u}g$
		satisfies
		\[
		\begin{cases}
			\sigma_{2}(g^{-1}A^t_{g_{u}})=\lambda & \quad\mathrm{~~in~~}M,\\
			h_{g_{u}}=0 & \quad\mathrm{~~on~~}\partial M.
		\end{cases}
		\]
	\end{thm}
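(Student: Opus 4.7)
The plan is the continuity method, with uniform a priori estimates provided by Theorem \ref{thm:global C2 estimate} (supplemented by Evans--Krylov and Schauder theory for higher regularity), together with a maximum-principle argument for uniqueness. The system is invariant under the scaling $(u,\lambda)\mapsto(u+c,e^{4c}\lambda)$, which preserves the boundary condition $h_{g_u}=0$, so we fix a normalization such as $\min_{M}u=0$, under which the pair $(u,\lambda)$ is uniquely determined by the equations.

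\emph{Uniqueness.} Given two normalized solutions $(u_1,\lambda_1),(u_2,\lambda_2)$ with $\lambda_1\ge\lambda_2$, set $\tilde u_2:=u_2+\tfrac14\log(\lambda_1/\lambda_2)$, so that $(\tilde u_2,\lambda_1)$ solves the same PDE as $(u_1,\lambda_1)$. For $w:=u_1-\tilde u_2$, a maximum attained on $\pa M$ is ruled out by Hopf's boundary-point lemma applied to the averaged linearized equation (thanks to $\pa_{\vec n}w=0$). At an interior maximum $x_0$, the conformal change formula together with $t\le 1$ gives
\[
A^t_{g_{u_1}}(x_0)-A^t_{g_{\tilde u_2}}(x_0)=\nabla^2 w(x_0)+\tfrac{1-t}{n-2}\Delta w(x_0)\,g\le 0,
\]
and since $u_1(x_0)=\tilde u_2(x_0)$, strict monotonicity of $\sigma_2$ on $\Gamma_2^+$ forces equality of the two Schouten-type tensors at $x_0$ and hence $\nabla^2 w(x_0)=0$. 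A strong maximum principle for the linearized elliptic operator then yields $w\equiv 0$ in $M$, so $u_1\equiv u_2+\tfrac14\log(\lambda_1/\lambda_2)$; the normalization $\min u_i=0$ forces $\lambda_1=\lambda_2$ and $u_1\equiv u_2$.

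\emph{Existence.} Introduce the continuity family for $s\in[0,1]$:
\[
\begin{cases}
\sigma_2^{1/2}(g^{-1}A^t_{g_{u_s}})=(1-s)\,\sigma_2^{1/2}(g^{-1}A^t_g)+s\,\lambda_s & \text{in }M,\\
\pa_{\vec n}u_s+h_g=(1-s)h_g & \text{on }\pa M,
\end{cases}
\]
together with the normalization $\min_{M}u_s=0$. At $s=0$ the triple $(u_0\equiv 0,\lambda_0>0)$ solves the system (since the coefficient of $\lambda_s$ vanishes); at $s=1$ we recover the desired eigenvalue problem. Let $S\subset[0,1]$ be the set of $s$ for which a solution exists. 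Closedness of $S$ follows from Theorem \ref{thm:global C2 estimate} together with Evans--Krylov and Schauder estimates and a $C^0$ bound from the normalization. For openness, linearizing at $(u_s,\lambda_s)$ yields a uniformly elliptic concave operator on $\Gamma_2^+$ with an oblique Neumann boundary condition; the resulting Fredholm system has a one-dimensional cokernel spanned by the direction of varying $\lambda_s$, and the implicit function theorem then produces a neighboring solution. The main obstacle is this openness step: one must verify both that deforming $\lambda_s$ actually fills the cokernel of the linearized system and that the admissibility $\lambda(g^{-1}A^t_{g_{u_s}})\in\Gamma_2^+$ and the hypothesis $h_g\ge 0$ persist along the entire path. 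Once this is established, $S=[0,1]$ and evaluating at $s=1$ completes the proof.
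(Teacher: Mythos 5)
Your overall strategy (continuity method plus the paper's a priori estimates, and a maximum-principle comparison for uniqueness of $\lambda$) is in the right spirit, but the step you yourself flag as ``the main obstacle'' --- openness --- is precisely where the real work lies, and you have not supplied it. For the pure eigenvalue path you propose, the linearized operator at a solution annihilates constants (the equation $\sigma_2(g^{-1}A^t_{g_u})=\lambda$ involves only derivatives of $u$), so it is not invertible and the implicit function theorem does not apply directly; one must genuinely analyze the Fredholm alternative for the pair $(\dot u,\dot\lambda)$, and you do not. The paper avoids this entirely by a two-step regularization: first it solves $\sigma_2(A^t_{g_u})=e^{\varepsilon u}$ with $\partial_{\vec n}u=-h_g$ for each $\varepsilon>0$, using the continuity path $\sigma_2(A^t_{g_u})=(s+(1-s)f)e^{\varepsilon u}$, $\partial_{\vec n}u=-sh_g$, where the zeroth-order term $e^{\varepsilon u}$ makes the linearization invertible and the maximum principle (comparison with an auxiliary linear Neumann problem $\Delta v=sl$, $\partial_{\vec n}v=-sh_g-s$, which is where $h_g\ge0$ enters) gives two-sided bounds on $e^{\varepsilon u}$ uniform in $s$ and $\varepsilon$; then it sends $\varepsilon\to0$, writing $e^{\varepsilon u}=e^{\varepsilon(u-\bar u)}e^{\varepsilon\bar u}$ so that $e^{\varepsilon\bar u}\to e^{\lambda}$ produces the eigenvalue and $u-\bar u$ the eigenfunction. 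Your path also leaves $\lambda_s$ completely uncontrolled: you need a uniform positive lower bound on the right-hand side (the $C^2$ estimates for $t=1$ depend on $\sup 1/f$) and an upper bound (for the gradient estimates), and neither is established.

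A second, more localized error: the scaling invariance you invoke is wrong for this equation. Since $\sigma_2$ is computed with respect to $g^{-1}$ rather than $g_u^{-1}$, the operator $u\mapsto\sigma_2(g^{-1}A^t_{g_u})$ is invariant under $u\mapsto u+c$ with $\lambda$ \emph{unchanged}; there is no factor $e^{4c}$. Consequently your renormalization $\tilde u_2=u_2+\tfrac14\log(\lambda_1/\lambda_2)$ does not convert a solution with eigenvalue $\lambda_2$ into one with eigenvalue $\lambda_1$, and the uniqueness argument as written collapses (indeed $u$ is only unique up to an additive constant; the uniqueness assertion concerns $\lambda$). The salvageable core of your argument --- comparing $u_1$ and $u_2$ at an extremum of $u_1-u_2$, using $t\le1$ and the monotonicity of $\sigma_2$ on $\Gamma_2^+$ in the interior and the Hopf lemma on the boundary --- is essentially what the paper uses to conclude that the constant $e^{\lambda}$ is unique.
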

	
	The $\sigma_k$ eigenvalue problem emerged in \cite{Guan-Wang, Ge-Lin-Wang,Ge-Wang1} etc. In particular, Ge-Lin-Wang \cite{Ge-Lin-Wang} demonstrated that the sign of $\sigma_2$-Yamabe constant coincides with that of $\sigma_2$ eigenvalue. Wang \cite{G.F.Wang} showed that the eigenvalue of Dirac operator can be estimated by  the $\sigma_2$  curvature on spin four-manifolds utilizing the $\sigma_2$ eigenvalue.  Duncan and Nguyen \cite{Duncan-Luc} studied the modified $\sigma_k$  eigenvalue problem on closed manifolds and thereby demonstrated some geometric consequences. For nonlinear eigenvalue problem with boundary, various works of  fully nonlinear equations in this direction have been developed in recent years, we refer to \cite{MQ} and related papers.

%
%

	We next restrict consideration to three-manifolds, where applications related to $\sigma_2$ curvature have been explored, offering a dedicated geometric description parallel to four-manifolds \cite{CGY1, CGY2}. Gursky-Viaclovsky \cite{GV01} classified three-manifolds associated to $\sigma_2$ curvature. Based on the nonlinear eigenvalue and Hamilton's results, Ge-Lin-Wang \cite{Ge-Lin-Wang} established a three dimensional sphere theorem. Employing a different method, Catino-Djadli \cite{CD} independently proved the same result. Ge-Wang \cite{Ge-Wang2} introduced a new conformal invariant and obtained a new characterization of the round three-sphere.
	
	Concerning the $C^2$ estimates for $\sigma_2$ curvature equation on  three-manifolds with boundary, the estimate of local double normal derivative  exhibits a remarkable feature: It can be directly obtained without the estimates of  second tangential derivatives on the boundary. Through a reduction argument, local $C^2$ estimates near the boundary can be derived, and are crucial for the existence of boundary $\sigma_2$ Yamabe problem.

		\begin{thm}\label{thm:three-double normal}
		On $(M^{3},g)$, assume as Theorem \ref{thm:global C2 estimate}. Let $u$ be
		a $C^{3}$ solution to equation (\ref{eq:general equation}). Then for
		any $\mathcal{O}\subset\mathcal{O}_{1}\subset\overline{M}$ there
		holds $$\nabla^2 u(\nabla d_g(x,\partial M), \nabla d_g(x,\partial M))\le C\qquad \mathrm{~~on~~}\quad  \mathcal{O}\cap\partial M,$$ where $C$ is
		a positive constant depending on $g,f$, $\sup_{\mathcal{O}_{1}\cap\partial M}(c(x,u)+|c_{x}|+|c_{z}|+|c_{xx}|+|c_{xz}|+|c_{xz}|)$,
		$\sup_{\mathcal{O}_{1}}|\nabla u|$, $\sup_{\mathcal{O}_{1}}(f(x,u)+|f_{x}|+|f_{z}|+|f_{xx}|+|f_{xz}|+|f_{xz}|)$
		and $\sup_{\mathcal{O}_{1}}\frac{1}{f}.$
	\end{thm}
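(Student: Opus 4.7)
The plan is to carry out a boundary maximum-principle argument on a carefully chosen auxiliary function, tailored to the three-dimensional structure of $\sigma_{2}$. Let $d(x)=d_{g}(x,\partial M)$, extended smoothly into $M$ so that $\nabla d|_{\partial M}=\vec n$, and fix a smooth cut-off $\eta\in C^{\infty}_{c}(\mathcal{O}_{1})$ with $\eta\equiv 1$ on $\mathcal{O}$. I would consider
\[
\Phi(x)=\eta(x)^{2}\,\nabla^{2}u(\nabla d,\nabla d)(x)+\kappa\,u(x)+K|\nabla u|^{2}(x),
\]
with constants $\kappa,K>0$ to be chosen. It then suffices to bound $\Phi$ at its maximum on $\overline{\mathcal{O}_{1}}\cap\partial M$, since any maximum with $\eta=0$ is automatically under control.

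Suppose the maximum is attained at $x_{0}\in\partial M$ with $\eta(x_{0})>0$. Near $x_{0}$ I would introduce Fermi coordinates $(y^{1},y^{2},t)$ with $t=d$ and $\partial_{t}=\vec n$, and rotate the tangential frame so that $(u_{\alpha\beta})_{\alpha,\beta=1,2}$ is diagonal at $x_{0}$. Tangentially differentiating the boundary condition $u_{\vec n}+h_{g}=c(x,u)$ gives an explicit bound on $u_{\alpha\vec n}(x_{0})$ in terms of $c$, $h_{g}$ and their first derivatives, hence also on the mixed entries $W_{\alpha\vec n}(x_{0})$ of $W=A^{t}_{g_{u}}$. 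The first-order max conditions $\Phi_{\alpha}(x_{0})=0$ express $u_{\vec n\vec n\alpha}(x_{0})$ as a known quantity divided by $\eta(x_{0})^{2}$.

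The three-dimensional feature enters when the equation $\sigma_{2}(W)=f^{2}$ is written at $x_{0}$ as
\[
W_{\vec n\vec n}(W_{11}+W_{22})+\det\tilde{W}=f^{2}+W_{1\vec n}^{2}+W_{2\vec n}^{2},
\]
with $\tilde W$ the tangential $2\times 2$ block. Since $\lambda(W)\in\Gamma_{2}^{+}$, the first Newton tensor $T_{1}(W)=\sigma_{1}(W)g-W$ is positive definite, which gives $W_{11}+W_{22}>0$ and the Maclaurin inequality $\sigma_{1}(W)^{2}\ge 3f^{2}$. The strategy is to combine (i) the tangentially differentiated equation, (ii) the expression for $u_{\vec n\vec n\alpha}(x_{0})$ from the max condition, and (iii) the $\Gamma_{2}^{+}$ structural inequalities, in order to solve for $W_{\vec n\vec n}(x_{0})$ in terms of already-controlled quantities. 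For $t=1$, the identity $W_{\vec n\vec n}=u_{\vec n\vec n}+O(|\nabla u|^{2})$ converts this directly into the desired pointwise bound. For $t<1$, the extra contribution $(1-t)(u_{11}+u_{22})$ to $W_{\vec n\vec n}$ is absorbed by enlarging $\kappa$ and $K$ in $\Phi$, so that the unfavorable tangential-tangential terms are dominated at $x_{0}$.

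The main obstacle I expect is verifying that the tangential-tangential derivatives $u_{\alpha\beta}$ truly drop out of the final estimate. This succeeds in dimension three precisely because the tangential block $\tilde W$ is only $2\times 2$: the equation depends on it solely through its trace and determinant, and the $\Gamma_{2}^{+}$ positivity of $T_{1}$ furnishes exactly the inequalities needed to eliminate these two invariants in favor of $W_{\vec n\vec n}$ and known data. In higher dimensions the analogous cancellation fails because $\sigma_{2}$ couples strictly more independent tangential entries, which is the algebraic reason why the theorem produces local double-normal estimates on three-manifolds without requiring separate double-tangential boundary estimates.
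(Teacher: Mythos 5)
Your overall strategy diverges from the paper's, and as written it has a genuine gap at its core. The paper does not maximize a second-derivative quantity over the boundary; it runs a Lions--Trudinger--Urbas/Ma--Qiu type argument on the \emph{first-order} function
\[
G(x)=\langle\nabla u,\nabla d\rangle-\varphi+\bigl(\langle\nabla u,\nabla d\rangle-\varphi\bigr)^{7/5}-\tfrac12 N d-N_{1}d_{g}^{2}(x_{0},x)
\]
over a half-ball, shows by an \emph{interior} maximum-principle computation $P^{i}_{j}G^{j}_{i}\le 0$ (with a delicate case analysis on the sign and size of $W^{2}_{2}$ and $\sigma_{1}(W^{\top})$) that the maximum must sit at the boundary point $x_{0}$, and only then reads off $u_{nn}(x_{0})\lesssim N$ from $G_{n}(x_{0})\le 0$. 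Your central algebraic claim --- that in dimension three the equation plus $\Gamma_{2}^{+}$ lets you ``solve for $W_{\vec n\vec n}(x_{0})$ in terms of already-controlled quantities'' --- is false at a single boundary point. The equation gives
\[
W_{\vec n\vec n}\,\sigma_{1}(W^{\top})=f^{2}+\textstyle\sum_{\alpha}W_{\vec n\alpha}^{2}-\sigma_{2}(W^{\top}),
\]
and while the mixed entries $W_{\vec n\alpha}$ are indeed controlled by the boundary condition, $\Gamma_{2}^{+}$ only gives $\sigma_{1}(W^{\top})>0$, not a positive lower bound, and $\sigma_{2}(W^{\top})=W_{11}W_{22}$ can be arbitrarily negative while $W_{11}+W_{22}$ stays small and positive. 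In that regime $W_{\vec n\vec n}$ is unbounded by this identity alone; controlling exactly this scenario (the paper's Case 2) is the entire content of the proof, and it is achieved through the interior computation (Claims 1 and 2 of Theorem \ref{high dimen-double normal}, adapted in Section \ref{Sect3}), which first yields an a priori bound $\sigma_{1}(W^{\top})(x_{1})\le C_{3}$ at the putative interior maximum. Nothing in your sketch produces such a bound.

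A second, related problem: your first-order conditions $\Phi_{\alpha}(x_{0})=0$ and the tangentially differentiated equation both introduce third-order tangential derivatives ($u_{\vec n\vec n\alpha}$ and $u_{\alpha\beta\gamma}$ through $F^{\alpha\beta}W_{\alpha\beta,\gamma}$) that are not controlled, and you give no mechanism for cancelling them. In the paper this is handled precisely because the test function is first-order in $u$ and the maximum is interior: the contraction $P^{i}_{j}u_{\vec n i}{}^{j}$ can be replaced by lower-order quantities using the once-differentiated equation, which is the whole point of the LTU device. If you want to salvage your route, you would need to either (a) add the paper's gradient-type normal-derivative function and carry out the interior maximum principle, or (b) supply an independent argument bounding $\sigma_{1}(W^{\top})$ from below and $-\sigma_{2}(W^{\top})$ from above at $x_{0}$; neither step is present.
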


	Motivated by Gursky-Viaclovsky \cite{GV0} and Guan-Wang \cite{Guan-Wang3}, we introduce a conformal invariant  by
	\[
	\Lambda_{2}(M^{3},\partial M, [g])=\sup\left\{ \mathrm{vol}(e^{-2u}g); g_{u}=e^{-2u}g\quad  \text{satisfies condition (\ref{curvature condition})} \right\},
	\]
	where
	\begin{align}\tag{CC}\label{curvature condition}
		\begin{cases}
			\sigma_{2}^{\frac{1}{2}}(g_{u}^{-1}A_{g_{u}})\ge\sigma_{2}^{\frac{1}{2}}(g_{\mathbb{S}^{3}}^{-1}A_{\mathbb{S}^{3}}),\lambda(g^{-1}A_{g_{u}})\in\Gamma_{2}^{+} & \text{~~in~~}\:M^3,\\[3pt]
			h_{g_{u}}\ge0 & \text{~~on~~}\,\,\partial M.
		\end{cases}
	\end{align}
	
	Under certain assumption of this conformal invariant, we obtain the following 
	existence result.
	\begin{thm}
		Suppose $\lambda(g^{-1}A_{g})\in\Gamma_{2}^{+}$, the mean curvature $h_{g}\ge0$ on $\partial M$, and 	 $\Lambda_{2}(M^{3},\partial M, [g])<\frac{1}{2}\mathrm{vol}(\mathbb{S}^{3},g_{\mathbb{S}^{3}}).$
		Then  there exists a smooth solution to
			\begin{align*}
		\begin{cases}
			\sigma_{2}^{\frac{1}{2}}(g^{-1}A_{g_{u}})=\sigma_{2}^{\frac{1}{2}}(g_{\mathbb{S}^{3}}^{-1}A_{\mathbb{S}^{3}})e^{-2u} &\mathrm{in}\quad M^{3},\\
			h_{g_{u}}=0 &\mathrm{on}\quad \partial M.
		\end{cases}
	\end{align*}
	\end{thm}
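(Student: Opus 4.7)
\bigskip

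\noindent\textbf{Proof plan.} The plan is to solve the equation by the continuity method, deforming from the eigenvalue solution produced by Theorem \ref{theorem:sigma_2_1st_bdry_eigenvalue-1} to the prescribed curvature equation. More precisely, let $u_0$ denote the solution at $t=1$ of Theorem \ref{theorem:sigma_2_1st_bdry_eigenvalue-1} and $\lambda_0>0$ the associated eigenvalue. Choose a smooth path $\psi_s(x,z)$ with $\psi_0\equiv\lambda_0$ and $\psi_1(x,z)=\sigma_{2}^{1/2}(g_{\mathbb{S}^{3}}^{-1}A_{\mathbb{S}^{3}})e^{-2z}$, keeping $\psi_s>0$, $\partial_z\psi_s\le 0$ throughout, and consider
\begin{equation*}
\begin{cases}
\sigma_{2}^{1/2}(g^{-1}A_{g_{u}})=\psi_{s}(x,u), \quad \lambda(g^{-1}A_{g_{u}})\in\Gamma_{2}^{+} & \text{in }M^{3},\\
h_{g_{u}}=0 & \text{on }\partial M.
\end{cases}
\end{equation*}
Let $S\subset[0,1]$ be the set of $s$ for which a smooth admissible solution $u_s$ exists; then $0\in S$ and the goal is $1\in S$.

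\smallskip

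\noindent\textbf{Openness.} At an admissible solution $u_s$ the linearization of the operator on the $\Gamma_2^+$ cone is elliptic; the Neumann boundary condition $\partial_{\vec n}u+h_g=0$ produces an oblique boundary problem. The monotonicity $\partial_z\psi_s\le 0$ together with the conformal change formula for $h_{g_u}$ ensures the linearized operator is invertible on appropriate $C^{4,\alpha}$ spaces, so openness follows by the implicit function theorem.

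\smallskip

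\noindent\textbf{Closedness.} This reduces to uniform a priori estimates along $S$. The upper bound $u_s\le C$ follows from the maximum principle applied at an interior maximum, using the $\Gamma_2^+$ condition and positivity of $\psi_s$; on $\partial M$ the boundary condition and $h_g\ge 0$ rule out an attaining boundary maximum going to $+\infty$. The $C^2$ estimate is provided in two steps: the global estimate in terms of $|u|_{C^1}$ by Theorem \ref{thm:global C2 estimate}, and the boundary double-normal estimate on three-manifolds by Theorem \ref{thm:three-double normal}; a standard interpolation then converts the $C^2$ bound to a $C^1$ bound modulo $|u|_{C^0}$. Evans--Krylov and Schauder theory then upgrade this to uniform $C^{4,\alpha}$ bounds.

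\smallskip

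\noindent\textbf{The main obstacle: the $C^0$ lower bound.} The only step not handled by the preceding ingredients is ruling out $\inf_M u_s\to -\infty$. The proof proceeds by contradiction and blow-up: if $\min u_{s_k}\to-\infty$ along a subsequence, rescale near the minimum to obtain, after passing to a limit, an entire admissible solution on a limiting model---either $\R^3$ or the half-space $\R^3_+$ (when the minimum concentrates at $\partial M$) with vanishing mean curvature on the boundary. A Liouville-type classification identifies this bubble with the standard round metric on $\mathbb{S}^3$, respectively with the upper hemispherical metric on $\mathbb{S}^3_+$. Using the curvature inequality in condition \eqref{curvature condition} and the boundary condition $h_{g_{u_s}}=0$, the rescaled conformal factors produce an admissible competitor $g_{\tilde u}=e^{-2\tilde u}g$ satisfying \eqref{curvature condition}, whose volume is at least $\tfrac12\mathrm{vol}(\mathbb{S}^3,g_{\mathbb{S}^3})$ (bubble absorption, using that the half-sphere has volume exactly half of the sphere). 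This contradicts $\Lambda_{2}(M^{3},\partial M,[g])<\tfrac12\mathrm{vol}(\mathbb{S}^3,g_{\mathbb{S}^3})$. The delicate points are the compatibility of the rescaling with the prescribed volume-form exponent on the right-hand side, and justifying the doubling/half-volume accounting for boundary bubbles; this is the step I expect to be the true heart of the argument. Once this is in place, $S$ is closed, hence $S=[0,1]$ and the theorem follows.
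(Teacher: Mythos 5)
Your continuity scheme breaks down at the openness step, and this is not a technicality but the central structural difficulty of the positive-cone $\sigma_2$-Yamabe problem. With $g_u=e^{-2u}g$, the linearization of $u\mapsto\sigma_{2}^{1/2}(g^{-1}A_{g_u})-\psi_s(x,u)$ at an admissible solution is $Lv=\tilde F^{ij}\nabla^2_{ij}v+b^i\nabla_iv-\partial_z\psi_s\,v$, so maximum-principle invertibility of the Neumann problem requires $\partial_z\psi_s\ge0$; your hypothesis $\partial_z\psi_s\le0$ is the opposite sign, and the target nonlinearity $\sigma_{2}^{1/2}(g_{\mathbb{S}^{3}}^{-1}A_{\mathbb{S}^{3}})e^{-2z}$ has strictly negative $z$-derivative, so the zeroth-order coefficient of $L$ is positive and invertibility genuinely fails in general (this is the degeneracy responsible for the noncompact family of solutions on the standard hemisphere). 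Worse, at your starting point $s=0$ the right-hand side is the constant $\lambda_0$, and both the equation and the boundary condition are invariant under $u\mapsto u+c$, so constants lie in the kernel of $L$ and the implicit function theorem cannot even be started there. The paper avoids this entirely: Theorem \ref{theorem:sigma_2_1st_bdry_eigenvalue-1} is used only to normalize the background metric so that $h_g=0$ and $\lambda(g^{-1}A_g)\in\Gamma_2^+$, and existence is then obtained along the Gursky--Viaclovsky/S.~Chen path (\ref{eq:path}), $\sigma_{2}^{1/2}(g^{-1}(A_{g_{u}}+S_{g}(t)))=(1-t)(\int_{M}e^{-4u})^{1/2}+\psi(t)\sigma_2^{1/2}(g_{\mathbb{S}^{3}}^{-1}A_{\mathbb{S}^{3}})e^{-2u}$, where the nonlocal term and the tensor $S_g(t)$ make the $t=0$ problem uniquely solvable (by $u\equiv0$) with nondegenerate linearization, and the passage to $t=1$ is by a degree argument resting only on a priori estimates, not on invertibility along the path.

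The a priori estimates are also not as you describe. The bound $\sup_Mu\le C$ does not follow from the maximum principle at an interior maximum: there $\nabla^2u\le0$ gives $A_{g_u}\le A_g$, hence $\sigma_2(g^{-1}A_{g_u})\le\sigma_2(g^{-1}A_g)$, which is a \emph{lower} bound on $\max u$; an interior minimum only yields $\min_Mu\le C$. In the paper $\sup u$ is controlled, following S.~Chen, by combining $\min u\le C$ with the oscillation bound coming from the Jin--Li--Li local gradient estimate, which in turn needs the lower bound $\inf u\ge-C$; consequently the estimates must be established in the order $C^0\to C^1\to C^2$ (Theorem \ref{thm:global C2 estimate} and Theorem \ref{thm:three-double normal} take $|u|_{C^1}$ as input), and your proposed interpolation from $C^2$ back to $C^1$ is circular. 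Finally, the step you explicitly defer --- ruling out $\inf u\to-\infty$ --- is the actual heart of the paper's proof (Theorem \ref{C0estimate}): after rescaling at the infimum and invoking the Li--Li Liouville classification on $\R^3$ or $\R^3_+$, no auxiliary ``bubble absorption'' competitor is constructed; rather, for $t_i$ near $1$ the path itself gives $\sigma_{2}^{1/2}(g_{u_i}^{-1}A_{g_{u_i}})\ge\sigma_{2}^{1/2}(g_{\mathbb{S}^{3}}^{-1}A_{\mathbb{S}^{3}})$ and $h_{g_{u_i}}=0$, so the solution metrics $g_{u_i}$ themselves satisfy (\ref{curvature condition}), while the concentrating bubble forces $\liminf_{i}\mathrm{vol}(e^{-2u_i}g)\ge\frac12\mathrm{vol}(\mathbb{S}^{3})$, contradicting $\Lambda_{2}(M^{3},\partial M,[g])<\frac12\mathrm{vol}(\mathbb{S}^{3})$. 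Without carrying out this blow-up and volume accounting, and without a workable replacement for openness, the proposal does not constitute a proof.
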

	
	Very recently,  the authors  \cite{CW} had proved the existence of $\sigma_2$ curvature equation on compact three- or four-manifolds with totally non-umbilic boundary. 	On three-manifolds with boundary, Gursky-Viaclovsky \cite{GV0}  introduced the conformal invariant 
	\[
	\Lambda_{2}(M^{3}, [g])=\sup\left\{ \mathrm{vol}(e^{-2u}g); \sigma_{2}^{\frac{1}{2}}(g_{u}^{-1}A_{g_{u}})\ge\sigma_{2}^{\frac{1}{2}}(g_{\mathbb{S}^{3}}^{-1}A_{\mathbb{S}^{3}}),\,\,\lambda(g^{-1}A_{g_{u}})\in\Gamma_{2}^{+}\right\}
	.\]
	Using
	 $$\sigma_{2}^{\frac{1}{2}}(g_{u}^{-1}A_{g_{u}})\ge\sigma_{2}^{\frac{1}{2}}(g_{\mathbb{S}^{3}}^{-1}A_{\mathbb{S}^{3}}),\,\,\lambda(g^{-1}A_{g_{u}})\in\Gamma_{2}^{+},$$
	we know that $\mathrm{Ric}_{g}\ge g$ and $R_g\ge R_{\mathbb{S}^3}$. Based on Bray's football theorem \cite{Bray}, Gursky-Viaclovsky proved that $\mathrm{vol}(M^3, g)\le \mathrm{vol}(\mathbb{S}^3,g_{\mathbb{S}^3})$ and thus, $\Lambda_{2}(M^{3}, [g])\le \mathrm{vol}(\mathbb{S}^3, g_{\mathbb{S}^3}).$

		
		\begin{convention} 
		The Latin letters like $1\leq i,j,k \leq n$ stand for the full indices; the Greek letters like $1 \leq \alpha,\beta,\gamma\leq n-1$ stand for tangential indices. 
		
	\end{convention}
		
	We organize the paper as follows. In Section \ref{Sect2}, we prove the global $C^2$ estimates, dividing the proof into two subsections. In Subsection \ref{Subsect2.1}, we demonstrate that  local second derivatives can be dominated by double normal derivative on the boundary. In Subsection \ref{Subsect2.2}, we construct an auxiliary function and obtain the double normal derivative using the maximum principle. With the global estimates  in Section \ref{Sect2}, we establish Theorem \ref{theorem:sigma_2_1st_bdry_eigenvalue-1} in Subsection \ref{Subsect2.3}. In Section \ref{Sect3}, we obtain the local double normal derivative on the boundary of a three-manifold, which results in the local $C^2$ estimates  deducing from the conclusion in Subsection \ref{Subsect2.1}. Imposing some natural geometric condition, we establish the existence of $\sigma_2$ Yamabe problem on three-manifolds with boundary.

	\section{The global $C^2$-estimate on manifolds}\label{Sect2}
	In this section, we will  reduce the $C^2$ estimates to the double normal derivative on the boundary for a general  function $F(g^{-1}W)$,  where $W:=A_{g_{u}}^{t}$ for short  and $F$ is a concave  function satisfying  the following conditions: Under the  geodesic normal coordinates,
	\begin{equation}\label{symmetric}
		\frac{\partial F }{\partial W_{ij}}\quad \text{is a positive definite matrix}
	\end{equation}
	and 
	\begin{equation}\label{lower bound of sum}
		\sum_{i=1}^n\frac{\partial F}{\partial W_{ii}}\ge C_0, \quad \text{for some positive constant}\,\, C_0
	\end{equation}
	and
	\begin{equation}\label{concave condition}
		\sum_{i,j,s,t=1}^n\frac{\partial^2 F}{\partial W_{ij}\partial W_{st}}\eta_{ij}\eta_{st}\le 0, \quad \text{for any symmetric matrix}\,\, (\eta_{ij})_{n\times n}.
	\end{equation}	
	In the following, we denote  $\tilde{F}^{ij}=\frac{\partial F }{\partial W_{ij}}$,  $\tilde{F}^{ij,rs}:=\frac{\partial^2 F}{\partial W_{ij}\partial W_{rs}}$ and 
	\[
	\tilde{P}^{ij}=\tilde{F}^{ij}+\frac{1-t}{n-2}\big(\sum_{k=1}^{n}\tilde{F}^{kl}g_{kl}\big)g^{ij}.
	\]

	Now we start with the proof of global $C^2$ estimates.

	\subsection{The reduction argument}	\label{Subsect2.1}
	For brevity, we let $\frac{\partial u}{\partial \vec n}:=u_{n}=c(x,u)-h_{g}:=\varphi(x,u)$ on $\partial M$. 
	\begin{thm}
		\label{Thm:reduction} 
		Assume that for $1+\frac{n(1-t)}{n-2}>0$, $F$ satisfies \eqref{symmetric}\eqref{lower bound of sum}\eqref{concave condition}.
		Suppose $u$ is a $C^4$ solution to 
		\begin{align}
			\begin{cases}
				F(g^{-1}A_{g_{u}}^{t})=f(x,u) &\quad \mathrm{~~in~~}\quad M,\\
				\frac{\partial u}{\partial \vec{n}}+h_{g}=c(x,u) &\quad\mathrm{~~on~~}\quad \partial M,
			\end{cases}\label{eq:general equation F}
		\end{align}
		where $f(x, u)$ is a non-negative $C^2$ function, $c(x,u)$ is a  $C^3$ function
		and $h_{g}$ is the mean curvature. Let $\mathcal{O},\mathcal{O}_{1}\subset\overline{M}$
		be two domains near $\pa M$ such that $\mathcal{O}\Subset\mathcal{O}_{1}$
		and $\mathcal{O}\cap\pa M\neq\emptyset$, we have 
		\begin{align*}
			\max_{x\in\overline{\mathcal{O}}}\max_{\substack{\zeta\in T_{x}\overline{M}\\
					|\zeta|=1
				}
			}\nabla^{2}u(\zeta,\zeta) & \le C+\max_{\overline{\mathcal{O}_{1}}\cap\partial M}\nabla^{2}u(\vec n,\vec n),
		\end{align*}
		where $C$ is a positive constant depending on $g$, $|u|_{C^1(\mathcal{O}_{1})}$, $c$
		and
		$
		\sup_{\mathcal{O}_{1}}(f(x,u)+|f_{x}|+|f_{z}|+|f_{xx}|+|f_{xz}|+|f_{xz}|).
		$
		
	\end{thm}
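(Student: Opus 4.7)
The plan is to implement the Lions-Trudinger-Urbas reduction scheme (as used by Ma-Qiu \cite{MQ} for the $k$-Hessian equation and refined in \cite{CW}), adapted to the conformal fully-nonlinear operator $F(g^{-1}A_{g_u}^t)$. Fix a cutoff $\eta\in C^\infty(\overline{M})$ with $\eta\equiv 1$ on $\mathcal{O}$ and $\mathrm{supp}(\eta)\Subset\mathcal{O}_1$, and consider the auxiliary function
$$\Phi(x,\zeta)\,=\,\eta(x)^{2}\,\nabla^{2}u(\zeta,\zeta)+A|\nabla u|^{2}-Bu$$
on $\{(x,\zeta):x\in\overline{\mathcal{O}_{1}},\,\zeta\in T_{x}M,\,|\zeta|=1\}$, for constants $A,B>0$ to be fixed. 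Assume $\Phi$ attains its maximum at $(x_{0},\zeta_{0})$ and that $\lambda_{1}:=\nabla^{2}u(\zeta_{0},\zeta_{0})$ is much larger than any quantity depending only on the data (else there is nothing to prove). Maximality in $\zeta$ forces $\zeta_{0}$ to be a unit eigenvector of the largest eigenvalue of $\nabla^{2}u(x_{0})$.

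If $x_{0}$ lies in the interior of $M$, I would extend $\zeta_{0}$ by parallel transport along geodesics through $x_{0}$ so that its frame components are locally constant at first order, then apply the linearization $\tilde{P}^{ij}$ to $\Phi$. Differentiating $F(g^{-1}W)=f$ once and twice in direction $\zeta_{0}$, invoking the concavity \eqref{concave condition} to discard $\tilde{F}^{ij,rs}W_{ij,\zeta_{0}}W_{rs,\zeta_{0}}$, and using the conformal formula for $W=A^t_{g_u}$ to rewrite $\tilde{F}^{ij}u_{ij\zeta_{0}\zeta_{0}}$ in terms of $\tilde{P}^{ij}(u_{\zeta_{0}\zeta_{0}})_{ij}$, one derives a pointwise inequality of the form
$$0\,\ge\,\tilde{P}^{ij}\Phi_{ij}\,\ge\,AC_{0}|\nabla^{2}u|^{2}-C(1+\lambda_{1})-C\lambda_{1},$$
where the trace bound \eqref{lower bound of sum} and the structural hypothesis $1+\tfrac{n(1-t)}{n-2}>0$ guarantee positivity of $\tilde{P}^{ij}$. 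Choosing $A,B$ large enough absorbs all error terms into the good $AC_{0}|\nabla^{2}u|^{2}$ contribution and yields $\lambda_{1}(x_{0})\le C$, which together with $\eta\equiv 1$ on $\mathcal{O}$ gives the desired bound there.

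If $x_{0}\in\partial M\cap\overline{\mathcal{O}_{1}}$, decompose $\zeta_{0}=\cos\theta\,\vec n+\sin\theta\,e$ with $e\in T_{x_{0}}\partial M$ of unit length. For $\theta=0$ we have $\lambda_{1}=\nabla^{2}u(\vec n,\vec n)(x_{0})$, which is already subsumed by the right-hand side of the theorem. For $\theta\neq 0$, the Hopf boundary-point lemma applied to $\Phi(\cdot,\zeta_{0})$ yields $\partial_{n}\Phi(x_{0},\zeta_{0})\ge 0$; unraveling this inequality requires trading the normal derivative $\partial_{n}\nabla^{2}u(\zeta_{0},\zeta_{0})$ for tangential quantities via the Neumann condition. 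Specifically, tangentially differentiating $u_{n}=\varphi(x,u)$ once yields a bound on mixed components through
$$\nabla^{2}u(e,\vec n)\,=\,\tilde\nabla_{e}\varphi+L(e,\nabla u),$$
and differentiating twice controls $u_{\vec n e e}$ modulo terms absorbable into $C$. Combining Hopf with these identities and with the eigenvector relations (which force off-diagonal Hessian entries to vanish at $x_{0}$) reduces the inequality to $\lambda_{1}(x_{0})\le C+\nabla^{2}u(\vec n,\vec n)(x_{0})$, as claimed.

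The hard part will be the boundary case with a tangential (or mixed) eigendirection, since no intrinsic $C^{2}$ control on $u|_{\partial M}$ is available a priori. The Hopf argument substitutes for that missing information, but demands careful bookkeeping of (i) the commutator terms produced when third derivatives of $u$ are traded via the Neumann condition, (ii) the structural role of $\tilde P^{ij}$, whose ellipticity rests on $1+\tfrac{n(1-t)}{n-2}>0$, and (iii) the boundary curvature term $L(\zeta_{0},\zeta_{0})\varphi$, which has no definite sign on a general (non-convex) boundary and must be absorbed into $C$ using the $A|\nabla u|^{2}-Bu$ correction. This interplay of Hopf boundary-point analysis, the conformal structure of $W=A^t_{g_u}$, and the positivity of the modified linearization $\tilde P^{ij}$ is the crux of the proof.
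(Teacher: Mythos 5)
Your overall strategy is the right one (the Lions--Trudinger--Urbas reduction that the paper also follows), but your auxiliary function is missing the ingredient that makes the boundary case work, and your treatment of the mixed normal--tangential direction would fail. The paper's test function is
\[
G(x,\zeta)=\eta e^{ad}\Big[\nabla^{2}u(\zeta,\zeta)-2\langle\zeta,\nabla d\rangle\big(\langle\zeta^{\top},\nabla\varphi\rangle-\nabla^{2}d(\nabla u,\zeta^{\top})\big)+B\langle\nabla u,\nabla d\rangle+|\nabla u|^{2}\Big],
\]
and the correction term $-2\langle\zeta,\nabla d\rangle(\cdots)$ is precisely engineered so that, on $\partial M$, the Neumann identity $\nabla^{2}u(\tau,\vec n)=\langle\tau,\nabla\varphi\rangle-\nabla^{2}d(\nabla u,\tau)$ cancels the cross term $2\alpha\beta\,\nabla^{2}u(\tau,\vec n)$ in the decomposition $\zeta=\alpha\vec n+\beta\tau$; this yields $G(x_{1},\zeta_{1})\le\alpha^{2}G(x_{1},\vec n)+\beta^{2}G(x_{1},\tau)$ and forces the maximizing direction to be either purely normal or purely tangential. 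Your $\Phi$ has no such term, so you must genuinely confront a maximizer $\zeta_{0}=\cos\theta\,\vec n+\sin\theta\,e$ with $\theta\neq 0,\pi/2$. Your proposed escape --- that $\zeta_{0}$ is an eigenvector of $\nabla^{2}u(x_{0})$ so off-diagonal entries vanish --- does not help: the Hopf inequality $\partial_{n}\Phi(x_{0},\zeta_{0})\le 0$ requires computing $\partial_{n}\big[\cos^{2}\theta\,u_{nn}+2\cos\theta\sin\theta\,u_{en}+\sin^{2}\theta\,u_{ee}\big]$, which involves $u_{nnn}$ and $u_{enn}$. Tangential differentiation of the Neumann condition $u_{n}=\varphi$ (once or twice) only produces $u_{\alpha n}$ and $u_{\alpha\beta n}$; it gives no access to third derivatives with two or three normal indices, and the interior equation cannot supply them either without already knowing the full Hessian. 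This is exactly the obstruction the correction term is designed to remove, and without it the mixed case is a genuine gap, not bookkeeping.

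Two further points. First, your gradient term $A|\nabla u|^{2}-Bu$ does not play the role of the paper's $B\langle\nabla u,\nabla d\rangle$: in the tangential boundary case, $\partial_{n}(u_{\tau\tau})$ produces $-L_{\tau\tau}u_{nn}$ and $2u_{n}u_{nn}$, whose coefficients have no sign on a general (non-convex) boundary, and these must be dominated by a term contributing $+Bu_{nn}$ with $B\ge 2(|c|+|h_{g}|)+2n|L|$; only then can one invoke $\sigma_{1}>0$ (i.e.\ $u_{nn}\ge-(n-1)u_{\tau\tau}-C$, which is where $1+\tfrac{n(1-t)}{n-2}>0$ enters) to convert the residual $u_{nn}$ into $u_{\tau\tau}$ and absorb it with the $e^{ad}$ factor. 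The normal derivative of $A|\nabla u|^{2}$ yields $2Au_{n}u_{nn}$ with $u_{n}=\varphi$ of indefinite sign, so it cannot serve this purpose. Second, your interior inequality $0\ge\tilde P^{ij}\Phi_{ij}\ge AC_{0}|\nabla^{2}u|^{2}-C(1+\lambda_{1})$ is plausible in outline and matches the paper's interior computation (which uses the differentiated equation, concavity to drop $\tilde F^{ij,rs}W_{ij,k}W_{rs,k}$, and the cone condition to get $|u_{ij}|\le C(u_{11}+1)$), but the boundary argument is the crux of the theorem and is where your proposal breaks down.
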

	
	\begin{proof}
		Let $\eta$ be a smooth cut-off function in $\mathcal{O}_{1}$ such
		that $\eta=1$ on $\mathcal{O}$ and denote by $\mathbb{S}\overline{M}$
		the spherical tangent bundle on $\overline{M}$. Given $(x,\zeta)\in\mathbb{S}\overline{M}$,
		we define 
		\begin{align*}
			G(x,\zeta):= & \eta e^{ad}\bigg[\nabla^{2}u(\zeta,\zeta)-2\langle\zeta,\nabla d\rangle\left(\langle\zeta^{\top},\nabla\varphi\rangle-\nabla^{2}d(\nabla u,\zeta^{\top})\right)\\
			& \qquad~~+B\langle\nabla u,\nabla d\rangle+|\nabla u|^{2}\bigg],
		\end{align*}
		where $a\in\R_{+}$ is to be determined later, $d(x):=d_{g}(x,\partial M)$,
		$\zeta^{\top}=\zeta-\langle\zeta,\nabla d\rangle\nabla d$ and $B=2(\sup_{\mathcal{O}_{1}\cap\partial M}|c(x,u)|+|h_g|+n\sup_{\mathcal{O}_{1}\cap\partial M}|L|)$.

		By definition of $\zeta^{\top}$ we have 
		\begin{align*}
			\langle\zeta^{\top},\nabla\varphi\rangle-\nabla^{2}d(\nabla u,\zeta^{\top})=\langle\zeta,\nabla\varphi\rangle-\langle\zeta,\nabla d\rangle\langle\nabla d,\nabla\varphi\rangle-\nabla^{2}d(\nabla u,\zeta).
		\end{align*}
		
		Suppose 
		\[
		\max_{x\in\overline{\mathcal{O}}}\max_{\substack{\zeta\in T_{x}\overline{M}\\
				|\zeta|=1
			}
		}G(x,\zeta)=G(x_{1},\zeta_{1})\qquad\mathrm{~~for~~some~~}(x_{1},\zeta_{1})\in\mathbb{S}\overline{M}.
		\]
		
		Without loss of generality, we assume that $G(x_{1},\zeta_{1})\gg1$,
		otherwise it is done.
		
		\vskip 4pt \emph{Case 1:} $x_{1}\in\mathcal{O}_{1}\cap\partial M$.
		\vskip 4pt
		
		\emph{Case 1.1:} $\zeta_{1}=\alpha\nabla d+\beta\tau$ for some $\tau\in T_{x_{1}}(\partial M)$
		with $|\tau|=1$, $\alpha=\langle\zeta_{1},\nabla d\rangle$ and $\beta=\langle\zeta_{1},\tau\rangle$
		satisfy $\alpha^{2}+\beta^{2}=1$.
		
		\vskip 4pt
		
		Notice that $\vec n=\nabla d$ on $\pa M$ and 
		\begin{align*}
			\nabla^{2}u(\tau,\vec n) & =\tau(u_{n})-(\nabla_{\tau}\vec n)u\\
			& =\langle\tau,\nabla\varphi\rangle-\nabla^{2}d(\nabla u,\vec n)\qquad\mathrm{~~on~~}\pa M
		\end{align*}
		by virtue of the boundary condition that $u_{ n}=\varphi$. Thus,
		we obtain 
		\begin{align*}
			G(x_{1},\zeta_{1})= & \eta e^{ad}\bigg[\alpha^{2}\nabla^{2}u(\vec n,\vec n)+\beta^{2}\nabla^{2}u(\tau,\tau)+2\alpha\beta\nabla^{2}u(\tau,\vec n)\\
			& \qquad~~-2\alpha\beta\left(\langle\tau,\nabla\varphi\rangle-\nabla^{2}d(\nabla u,\tau)\right)+B\langle\nabla u,\nabla d\rangle+|\nabla u|^{2}\bigg]\Bigg|_{x_{1}}\\
			= & \eta e^{ad}\bigg[\alpha^{2}\nabla^{2}u(\vec n,\vec n)+\beta^{2}\nabla^{2}u(\tau,\tau)+B\langle\nabla u,\nabla d\rangle+|\nabla u|^{2}\bigg]\Bigg|_{x_{1}}\\
			= & \alpha^{2}G(x_{1},\vec n)+\beta^{2}G(x_{1},\tau)\le\alpha^{2}G(x_{1},\vec n)+\beta^{2}G(x_{1},\zeta_{1}),
		\end{align*}
		which yields that 
		\[
		G(x_{1},\zeta_{1})\le G(x_{1},\vec n).
		\]
		
		This forces $\zeta_{1}$ to be either $\vec n$ or a unit vector in $T_{x_{1}}(\partial M)$.
		
		\vskip 4pt \emph{Case 1.2:} $\zeta_{1}\in T_{x_{1}}(\partial M)$.
		\vskip 4pt
		
		Under Fermi coordinates around $x_{1}$, the metric can be expressed
		as 
		\[
		g=\ud x_{n}^{2}+g_{\alpha\beta}\ud x_{\alpha}\ud x_{\beta},
		\]
		where $(x_{1},\cdots,x_{n-1})$ is the geodesic normal coordinates
		on $\partial M$ and $\partial_{x_{n}}=\nabla d$ is the inward unit
		normal on $\partial M$. Without loss of generality, we assume $\zeta_{1}=\frac{\pa}{\pa{x_{1}}}$
		at $x_{1}$. We choose the tangent vector field $\frac{1}{\sqrt{g_{11}}}\partial_{x_{1}}$
		as an extension of $\zeta_{1}$ to the interior and still denote by
		$\zeta_{1}$. On $\partial M$, the Christoffel symbols are $\Gamma_{\alpha\beta}^{n}=L_{\alpha\beta},$
		$\Gamma_{\alpha n}^{\beta}=-L_{\alpha\gamma}g^{\gamma\beta},$ $\Gamma_{in}^{n}=0$
		and $\Gamma_{nn}^{i}=0$.
		
		Observe that 
		\begin{align*}
			u_{\alpha n}=\partial_{\alpha}u_{n}+L_{\alpha}^{\gamma}u_{\gamma}=\partial_{\alpha}(c(x,u)-h_g)+L_{\alpha}^{\gamma}u_{\gamma},
		\end{align*}
		\begin{align*}
			u_{\alpha\beta n}= & L_{\beta}^{\gamma}u_{\gamma\alpha}+L_{\alpha}^{\gamma}u_{\gamma\beta}-L_{\alpha\beta}u_{nn}+\partial_{\beta}\partial_{\alpha}(c(x,u))\\
			& +L_{\alpha;\beta}^{\gamma}u_{\gamma}+L_{\alpha}^{\gamma}L_{\beta\gamma}u_{n}-h_{g;\alpha\beta}+R_{\alpha\beta n}^{i}u_{i}\\
			= & L_{\beta}^{\gamma}u_{\gamma\alpha}+L_{\alpha}^{\gamma}u_{\gamma\beta}-L_{\alpha\beta}u_{nn}+c_{z}u_{\alpha\beta}\\
			& +L_{\alpha;\beta}^{\gamma}u_{\gamma}+L_{\alpha}^{\gamma}L_{\beta\gamma}u_{n}-h_{g;\alpha\beta}+R_{\alpha\beta n}^{i}u_{i}\\
			& +c_{x_{\alpha}x_{\beta}}+c_{x_{\alpha}z}u_{\beta}+c_{zx_{\beta}}u_{\alpha}+c_{zz}u_{\beta}u_{\alpha}+c_{z}L_{\alpha\beta}u_{n}
		\end{align*}
		where
		\begin{align*}
			\partial_{\beta}\partial_{\alpha}(c(x,u)) & =\partial_{\beta}(c_{x_{\alpha}}+c_{z}u_{\alpha})\\
			& =c_{x_{\alpha}x_{\beta}}+c_{x_{\alpha}z}u_{\beta}+c_{zx_{\beta}}u_{\alpha}+c_{zz}u_{\beta}u_{\alpha}+c_{z}(u_{\alpha\beta}+L_{\alpha\beta}u_{n}).
		\end{align*}
		In particular, we have 
		\begin{align*}
			\partial_{n}(u_{11})= & 2\Gamma_{1n}^{\alpha}u_{\alpha1}+u_{11n}\\
			= & u_{11n}-2L_{1}^{\alpha}u_{\alpha1}\\
			= & -L_{11}u_{nn}+c_{z}u_{11}+L_{1;1}^{\gamma}u_{\gamma}+L_{1}^{\gamma}L_{1\gamma}u_{n}\\
			& +c_{x_{1}x_{1}}+c_{x_{1}z}u_{1}+c_{zx_{1}}u_{1}+c_{zz}u_{1}u_{1}+c_{z}L_{11}u_{n}-h_{g;11}+R_{11n}^{i}u_{i}.
		\end{align*}
		
		Notice that 
		\[
		G(x,\zeta_{1})=\eta e^{ad}\bigg(\frac{u_{11}}{g_{11}}+Bu_{n}+|\nabla u|^{2}\bigg)\qquad\mathrm{~~near~~}x_{1}.
		\]
		Then we have 
		\begin{align}
			0\ge & G_{n}(x_{1},\zeta_{1})\label{eq:hopf}\\
			= & \eta\bigg[a(u_{11}+Bu_{n}+|\nabla u|^{2})+\pa_{n}(u_{11})-u_{11}\pa_{n}(g_{11})+Bu_{nn}+2u^{\alpha}u_{\alpha n}+2u_{n}u_{nn}\bigg]\no\\
			= & \eta\bigg[a(u_{11}+Bu_{n}+|\nabla u|^{2})-L_{11}u_{nn}+c_{z}u_{11}+L_{1;1}^{\gamma}u_{\gamma}+L_{1}^{\gamma}L_{1\gamma}u_{n}\no\\
			& \quad~~+c_{x_{1}x_{1}}+c_{x_{1}z}u_{1}+c_{zx_{1}}u_{1}+c_{zz}u_{1}u_{1}+c_{z}L_{11}u_{n}-h_{g;11}+R_{11n}^{i}u_{i}+2L_{11}u_{11}\no\\
			& \quad~~+Bu_{nn}+2u^{\alpha}(\partial_{\alpha}(c-h_{g})+L_{\alpha}^{\gamma}u_{\gamma})+2u_{n}u_{nn}\bigg]\no.
		\end{align}
		
		Since $\sigma_{1}(W)>0$ and $(x_{1},\zeta_{1})$ is a maximum point
		of $G$, for $1+\frac{n(1-t)}{n-2}>0$ we have 
		\[
		u_{nn}+(n-1)u_{11}\ge \Delta u\ge-C
		\]
		and then $u_{nn}\ge-(n-1)u_{11}-C$. We may choose $B=2[\sup_{\mathcal{O}_{1}\cap\partial M}(c(x,u)+|h_{g}|)+n\sup_{\mathcal{O}_{1}\cap\partial M}|L|]$
		such that 
		\[
		(B-L_{11}+2u_{n})u_{nn}\ge-(n-1)(B-L_{11}+2u_{n})u_{11}-(B-L_{11}+2u_{n})C.
		\]
		\vskip 3pt
		Taking $a$ large such that 
		\[
		a>\sup_{\mathcal{O}_{1}\cap\partial M}(n-1)[B+2|L|+2|c_{z}|+2|h_{g}|]
		\]
		and then 
		\[
		u_{11}(x_{1})\le C,
		\]
		where $C$ depends on $\sup_{\mathcal{O}_{1}\cap\partial M}(|\tilde{\nabla}_{xx}^{2}c|+|c_{x_{1}z}|+|c_{zz}|+|c_{z}|+|\tilde{\nabla}_{x}c|+c)$,
		$\|g\|_{C^{2}(\overline{\mathcal{O}_{1}})}$, $\sup_{\mathcal{O}_{1}}|\nabla u|$.
		\vskip 4pt \emph{Case 2:} $x_{1}\in\mathcal{O}_{1}\cap M$. \vskip
		4pt
		
		We now choose geodesic normal coordinates around $x_{1}$ such that
		\[
		g_{ij}=\delta_{ij},\quad\Gamma_{ij}^{k}=0\qquad\mathrm{~~at~~}\:x_{1}.
		\]
		We also use the extended tangent vector field $\zeta_{1}(x)=\frac{1}{\sqrt{g_{11}}}\partial_{x_{1}}$
		near $x_{1}$ such that $\zeta_{1}=\frac{\partial}{\partial x_{1}}$
		at $x_{1}$.
		
		With the above extended vector field $\zeta_{1}$, we rewrite  near $x_{1}$
		\[
		G(x,\zeta_{1})=\eta e^{ad}\left(\frac{u_{11}}{g_{11}}-a^{l}u_{l}+b+B\langle\nabla u,\nabla d\rangle+|\nabla u|^{2}\right),
		\]
		where $a^{l}$ depends on $g,d(x)$, $c(x,u)$ and $\nabla d$,  and $b$ depends on $d, h_g,  c_x$.
		Since $G$ attains its maximum at $(x_{1},\zeta_{1})$ and $\lambda(W)\in\Gamma_{2}^{+}$,
		at $x_{1}$ we have 
		\[
		|W_{ij}|\leq C\sigma_{1}(W)\leq C(\Delta u+1)\leq C(u_{11}+1).
		\]
		This in turn implies that 
		\[
		|u_{ij}|\leq C(u_{11}+1)\qquad\mathrm{~~at~~}x_{1}.
		\]
		For brevity, we let 
		\[
		E:=\frac{u_{11}}{g_{11}}-a^{l}u_{l}+b+B\langle\nabla u,\nabla d\rangle+|\nabla u|^{2}.
		\]
		
		At $x_{1}$, we have 
		\begin{equation}
			0=\partial_{i}\log G=\frac{\eta_{i}}{\eta}+ad_{i}+\frac{\pa_{i}E}{E},\qquad\forall~1\leq i\leq n\label{dG}
		\end{equation}
		and $(\partial_{j}\partial_{i}\log G)$ is negative semi-positivity.
		
		In the following, computations are evaluated at $x_{1}$.
		
		A direct computation yields 
		\begin{align*}
			\partial_{j}\partial_{i}(u_{11})=\partial_{j}(u_{11i}+2\Gamma_{i1}^{l}u_{l1})=u_{11ij}+2\partial_{j}\Gamma_{i1}^{l}u_{l1}
		\end{align*}
		and 
		\begin{align}
			\pa_{i}E=\frac{\pa_{i}(u_{11})}{g_{11}}-\frac{u_{11}}{g_{11}^{2}}\pa_{i}(g_{11})-\pa_{i}a^{l}u_{l}-a^{l}\pa_{i}\pa_{l}u+b_{i}+\left(B\langle\nabla u,\nabla d\rangle+|\nabla u|^{2}\right)_{i}.\label{dE}
		\end{align}
		Then we have 
		\begin{align*}
			\pa_{j}\pa_{i}E= & \pa_{j}\pa_{i}(u_{11})-u_{11}\pa_{j}\pa_{i}g_{11}-\pa_{j}\pa_{i}a^{l}u_{l}-a^{l}\pa_{j}\pa_{i}\pa_{l}u-\pa_{i}a^{l}u_{lj}-\pa_{j}a^{l}u_{li}\\
			& +\pa_{j}\pa_{i}b+\left(B\langle\nabla u,\nabla d\rangle+|\nabla u|^{2}\right)_{ij}\\
			= & \pa_{j}\pa_{i}(u_{11})-u_{11}\pa_{j}\pa_{i}g_{11}-\pa_{j}\pa_{i}a^{l}u_{l}-a^{l}(u_{lij}+\pa_{j}\Gamma_{li}^{k}u_{k})-\pa_{i}a^{l}u_{lj}-\pa_{j}a^{l}u_{li}\\
			& +\pa_{j}\pa_{i}b+\left(B\langle\nabla u,\nabla d\rangle+|\nabla u|^{2}\right)_{ij}\\
			= & u_{11ij}+2\partial_{j}\Gamma_{i1}^{l}u_{l1}-u_{11}\pa_{j}\pa_{i}g_{11}-\pa_{j}\pa_{i}a^{l}u_{l}-a^{l}(u_{lij}+\pa_{j}\Gamma_{li}^{k}u_{k})-\pa_{i}a^{l}u_{lj}-\pa_{j}a^{l}u_{li}\\
			& +\pa_{j}\pa_{i}b+Bu_{kij}d_{k}+Bu_{ki}d_{kj}+Bu_{kj}d_{ki}+Bu_{k}d_{kij}+2u_{kj}u_{ki}+2u_{k}u_{kij}.
		\end{align*}
		
		It follows from \eqref{dG} and \eqref{dE} that 
		\begin{equation}
			u_{11i}=-E(\frac{\eta_{i}}{\eta}+ad_{i})-Bu_{ki}d_{k}+a^{l}u_{li}+\pa_{i}a^{l}u_{l}-b_{i}-Bu_{k}d_{ki}-2u_{k}u_{ki}.\label{third_derivative_u}
		\end{equation}
		Thus, again by \eqref{dG} we obtain 
		\begin{align*}
			& \partial_{j}\partial_{i}\log G\\
			= & \frac{\eta_{ij}}{\eta}-\frac{\eta_{i}\eta_{j}}{\eta^{2}}+ad_{ij}+\frac{\partial_{j}\partial_{i}E}{E}-\frac{\partial_{i}E\partial_{j}E}{E^{2}}\\
			= & \frac{\eta_{ij}}{\eta}-\frac{\eta_{i}\eta_{j}}{\eta^{2}}+ad_{ij}+\frac{\partial_{j}\partial_{i}E}{E}-(\frac{\eta_{i}}{\eta}+ad_{i})(\frac{\eta_{j}}{\eta}+ad_{j}).
		\end{align*}		
		Differentiating the equation, we have 
		\begin{equation}
			\tilde{P}^{ij}u_{ijk}+\tilde{F}^{ij}\bigg(-\frac{2-t}{2}2u_{p}u_{pk}g_{ij}+u_{ik}u_{j}+u_{jk}u_{i}\bigg)=-\tilde{F}^{ij}W_{ij,k}-f_{k}\label{eq:first derivative to the equation}
		\end{equation}
		and 
		\begin{align}
			&  \tilde{P}^{ij}u_{ijkk}+\tilde{F}^{ij}\bigg(-\frac{2-t}{2}(|\nabla u|^{2})_{kk}g_{ij}+u_{ikk}u_{j}+u_{jkk}u_{i}+2u_{ik}u_{jk}\bigg)\label{eq:second derivative to equation}\\
			& =f_{kk}-\tilde{F}^{ij,rs}W_{ij,k}W_{rs,k}-\tilde{F}^{ij}W_{ij,kk}.\nonumber 
		\end{align}
		
		Hence, putting these facts together we conclude that 
		\begin{align*}
			0\ge & E\tilde{P}^{ij}\pa_{i}\pa_{j}(\log G)\\
			= & E\tilde{P}^{ij}(\frac{\eta_{ij}}{\eta}-2\frac{\eta_{i}\eta_{j}}{\eta^{2}})+aE\tilde{P}^{ij}d_{ij}-a^{2}E\tilde{P}^{ij}d_{i}d_{j}-2aE\tilde{P}^{ij}\frac{\eta_{i}}{\eta}d_{j}+\tilde{P}^{ij}\partial_{j}\partial_{i}E\\
			\geq & E\tilde{P}^{ij}(\frac{\eta_{ij}}{\eta}-3\frac{\eta_{i}\eta_{j}}{\eta^{2}})-CE\sum \tilde{F}^{ii}+\tilde{P}^{ij}\left(u_{11ij}+Bu_{kij}d_{k}-a^{l}u_{lij}+2u_{k}u_{kij}\right)\\
			& +2B\tilde{P}^{ij}u_{ki}d_{kj}+B\tilde{P}^{ij}u_{k}d_{kij}+2\tilde{P}^{ij}u_{kj}u_{ki}+\tilde{P}^{ij}\pa_{j}\pa_{i}b-2\tilde{P}^{ij}\pa_{i}a^{l}u_{lj}-\tilde{P}^{ij}\pa_{j}\pa_{i}a^{l}u_{l}.
		\end{align*}
		
		Using 
		\begin{align*}
			& \tilde{P}^{ij}u_{11ij}\\
			= & \tilde{P}^{ij}(u_{ij11}+R_{11i}^{m}u_{mj}+2R_{i1j}^{m}u_{m1}+R_{11j}^{m}u_{mi}+R_{11i,j}^{m}u_{m}+R_{i1j,1}^{m}u_{m})\\
			\geq & \tilde{P}^{ij}u_{ij11}+2\tilde{P}^{ij}R_{11i}^{m}u_{mj}+2\tilde{P}^{ij}R_{i1j}^{m}u_{m1}-C\sum_{i}\tilde{F}^{ii},
		\end{align*}
		we have 
		\begin{align}
			0\ge & E\tilde{P}^{ij}\pa_{j}\pa_{i}(\log G)\label{eq:main negative}\\
			\ge & E\tilde{P}^{ij}(\frac{\eta_{ij}}{\eta}-3\frac{\eta_{i}\eta_{j}}{\eta^{2}})+2\tilde{P}^{ij}u_{kj}u_{ki}+\tilde{P}^{ij}u_{ij11}+2\tilde{P}^{ij}u_{k}u_{kij}+B\tilde{P}^{ij}u_{kij}d_{k}-\tilde{P}^{ij}a^{l}u_{lij}\nonumber \\
			& -CE\sum \tilde{F}^{ii}-C\sum_{i}\tilde{F}^{ii}\nonumber \\
			:= & E\tilde{P}^{ij}(\frac{\eta_{ij}}{\eta}-3\frac{\eta_{i}\eta_{j}}{\eta^{2}})+2\tilde{P}^{ij}u_{kj}u_{ki}+I+II+III-CE\sum \tilde{F}^{ii}-C\sum_{i}\tilde{F}^{ii}\nonumber 
		\end{align}
		where $I=\tilde{P}^{ij}u_{ij11},II=2\tilde{P}^{ij}u_{k}u_{kij},III=(Bd_{k}-a_{k})\tilde{P}^{ij}u_{kij}$.
		
		By (\ref{eq:second derivative to equation}) and  $-\tilde{F}^{ij,rs}W_{ij,k}W_{rs,k}\ge0$,
		we have 
		\begin{align}\label{term-I}
			I\geq & f_{11}+\tilde{F}^{ij}(-2u_{i11}u_{j}+(2-t)u_{k}u_{k11}g_{ij})+\tilde{F}^{ij}(-2u_{1i}u_{1j}+(2-t)u_{k1}^{2}g_{ij})-C\sum_{i}\tilde{F}^{ii}.
		\end{align}
		Using 
		\[
		u_{i11}=u_{1i1}=u_{11i}+R_{1i1}^{m}u_{m}
		\]
		and \eqref{third_derivative_u}, we have 
		\begin{equation}
			\begin{aligned} & \tilde{F}^{ij}(-2u_{i11}u_{j}+(2-t)u_{k}u_{k11}g_{ij})\\
				= & -2u_{j}\tilde{F}^{ij}\big[-E(\frac{\eta_{i}}{\eta}+ad_{i})-B(u_{li}d_{l}+d_{li}u_{l})+a^{l}u_{li}+\pa_{i}a^{l}u_{l}-b_{i}-2u_{l}u_{li}+R_{1i1}^{m}u_{m}\big]\\
				& +(2-t)u_{k}(\sum_{i}\tilde{F}^{ii})\big[-E(\frac{\eta_{k}}{\eta}+ad_{k})-B(u_{lk}d_{l}+d_{lk}u_{l})+a^{l}u_{lk}+\pa_{k}a^{l}u_{l}-b_{k}-2u_{l}u_{lk}+R_{1k1}^{m}u_{m}\big]\\
				\geq & 4u_{j}\tilde{F}^{ij}u_{k}u_{ki}+2Bu_{j}\tilde{F}^{ij}(u_{li}d_{l}+d_{li}u_{l})-2u_{j}\tilde{F}^{ij}(a^{l}u_{li}+\pa_{i}a^{l}u_{l})-2u_{j}\tilde{F}^{ij}R_{1i1}^{m}u_{m}\\
				& -2(2-t)(\sum_{i}\tilde{F}^{ii})u_{k}u_{l}u_{lk}-(2-t)B(\sum_{i}\tilde{F}^{ii})u_{k}(u_{lk}d_{l}+d_{lk}u_{l})+(2-t)u_{k}(\sum_{i}\tilde{F}^{ii})(a^{l}u_{lk}+\pa_{k}a^{l}u_{l})\\
				& +2Eu_{j}\tilde{F}^{ij}(\frac{\eta_{i}}{\eta}+ad_{i})-(2-t)E(\sum_{i}\tilde{F}^{ii})u_{k}(\frac{\eta_{k}}{\eta}+ad_{k})+(\sum_{i}\tilde{F}^{ii})u_{k}R_{1k1}^{m}u_{m}-C\sum_{i}\tilde{F}^{ii}.
			\end{aligned}
			\label{eq:third deritive term}
		\end{equation}
		Using 
		\[
		\begin{aligned}\tilde{F}^{ij}u_{lij} & =\tilde{F}^{ij}u_{ijl}+\tilde{F}^{ij}R_{ilj}^{m}u_{m}\\
			& \ge \tilde{F}^{ij}u_{ijl}-C\sum_{i}\tilde{F}^{ii}
		\end{aligned}
		\]
		and (\ref{eq:first derivative to the equation}), we obtain
		
		\begin{align}
			II:= & 2\tilde{P}^{ij}u_{l}u_{lij}\no\label{eq:II}\\
			= & 2u_{l}f_{l}-2u_{l}\tilde{F}^{ij}(u_{i}u_{jl}+u_{j}u_{il}-(2-t)u_{k}u_{kl}g_{ij}+A_{ij,l})-C\sum_{i}\tilde{F}^{ii}
		\end{align}
		and 
		\begin{align}
			III:= & (Bd_{k}-a_{k})\tilde{P}^{ij}u_{kij}\no\label{eq:III}\\
			\ge & (Bd_{k}-a_{k})f_{k}-C\sum_{i}\tilde{F}^{ii}-(Bd_{k}-a_{k})\tilde{F}^{ij}(u_{i}u_{jk}+u_{j}u_{ik}-(2-t)u_{l}u_{lk}g_{ij}+A_{ij,k}).
		\end{align}
		
		Therefore, we combine (\ref{eq:main negative})(\ref{eq:third deritive term})(\ref{term-I})(\ref{eq:II})
		and (\ref{eq:III}) to conclude that
		
		\begin{align*}
			0\ge & E\tilde{P}^{ij}(\log G)_{ij}\\
			\ge & E\tilde{P}^{ij}(\frac{\eta_{ij}}{\eta}-3\frac{\eta_{i}\eta_{j}}{\eta^{2}})+2\tilde{P}^{ij}u_{kj}u_{ki}+\tilde{F}^{ij}(-2u_{1i}u_{1j}+(2-t)u_{k1}^{2}g_{ij})\\
			& -C(\sum_{i}\tilde{F}^{ii})\frac{|\nabla\eta|}{\eta}E-CE\sum \tilde{F}^{ii}-C\sum_{i}\tilde{F}^{ii}-\|f\|_{C^{2}(\overline{\mathcal{O}_{1}})}\\
			\ge & (2-t)(\sum_{i}\tilde{F}^{ii})u_{k1}^{2}+E\tilde{P}^{ij}(\frac{\eta_{ij}}{\eta}-3\frac{\eta_{i}\eta_{j}}{\eta^{2}})-C(\sum_{i}\tilde{F}^{ii})\frac{|\nabla\eta|}{\eta}E-CE\sum \tilde{F}^{ii}\\
			& -C\sum_{i}\tilde{F}^{ii}-\|f\|_{C^{2}(\overline{\mathcal{O}_{1}})}.
		\end{align*}
		This implies 
		\[
		u_{11}(x_{1})\le C.
		\]
		
		Combining the above two cases we obtain the desired estimate. 
	\end{proof}
	It remains to estimate the double normal derivative of $u$ on the
	boundary. Define 
	\[
	M_{r}^{*}:=\{x\in\overline{M};d_{g}(x,\partial M)<r\}.
	\]
	\subsection{The double normal derivative on $\partial M$}\label{Subsect2.2}
	\begin{thm}\label{high dimen-double normal}
		Let $u$ be a $C^3$ solution to (\ref{eq:general equation}) for $t\le 1$. Then 
		\[
		\sup_{\partial M}u_{nn}\le C,
		\]
		where $C$ is a positive constant depending on $g$,	$\sup_{M_{r}^{*}}|\nabla u|$,$$\sup_{\partial M}(|\tilde{\nabla}_{xx}^{2}c|+|c_{xz}|+|c_{zz}|+|c_{z}|+|\tilde{\nabla}_{x}c|+c),$$
		and 
		\[
		\sup_{M_{r}^{*}}(\frac{1}{f(x,u)}+f(x,u)+|f_{x}|+|f_{z}|+|f_{xx}|+|f_{xz}|+|f_{xz}|).
		\]
	\end{thm}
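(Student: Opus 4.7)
Following the Lions--Trudinger--Urbas philosophy, I would estimate $u_{nn}|_{\partial M}$ by constructing an auxiliary function $\Phi$ on a thin one-sided collar $M_r^*$ whose value on $\partial M$ coincides, up to already-controlled quantities, with $\nabla^2 u(\nabla d,\nabla d)$, and then bound $\Phi$ by applying $\tilde P^{ij}\partial_i\partial_j$ in the interior together with a Hopf argument on $\partial M$. Theorem \ref{Thm:reduction} makes the set-up self-consistent: the inner face $\{d=r\}$ value of $\Phi$ is dominated by $\sup_{\partial M}u_{nn}$ plus data, so for $r$ chosen small enough the maximum of $\Phi$ over $\overline{M_r^*}$ must be attained on $\partial M$.

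A natural candidate is
$$
\Phi(x)\;=\;\nabla^{2}u(\nabla d,\nabla d)\;+\;B\bigl(|\nabla u|^{2}+2\varphi(x,u)\langle\nabla u,\nabla d\rangle\bigr)\;+\;A\,d(x),
$$
with $\varphi=c-h_{g}$ and constants $A,B\gg 1$. The composite $|\nabla u|^{2}+2\varphi u_{n}$ is engineered around the Neumann condition: linearizing it twice and invoking the once-differentiated equation \eqref{eq:first derivative to the equation} produces a definite positive contribution of type $(2-t)\bigl(\sum\tilde F^{ii}\bigr)|u_{kn}|^{2}$, exactly the ``good'' term already witnessed in Case 2 of the proof of Theorem \ref{Thm:reduction}. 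The linear barrier $Ad$ then absorbs residual first-order pieces through the positivity \eqref{lower bound of sum}.

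The principal obstacle, flagged in the introduction, is controlling $\tilde P^{ij}d_{ij}$ together with the allied $\tilde P^{ij}(\nabla d)_{i}(\nabla d)_{j}$, which arise whenever one commutes $\tilde P^{ij}\partial_{i}\partial_{j}$ through $\nabla^{2}u(\nabla d,\nabla d)$; these terms encode the non-umbilic extrinsic geometry of $\partial M$ and carry no privileged sign from the cone condition alone. The plan is to rewrite them, using the boundary commutation identity $u_{\alpha n}=\partial_{\alpha}\varphi+L_{\alpha}^{\gamma}u_{\gamma}$ recorded in Case 1.2 of the proof of Theorem \ref{Thm:reduction} and the twice-differentiated equation \eqref{eq:second derivative to equation} (where concavity \eqref{concave condition} kills $\tilde F^{ij,rs}W_{ij,n}W_{rs,n}\ge 0$), as a multiple of $\varphi\langle\nabla u,\nabla d\rangle$ contracted with boundary curvature. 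The coefficient $2\varphi\langle\nabla u,\nabla d\rangle$ inside $\Phi$ is tuned so that this rewriting cancels the indefinite contribution precisely when $\varphi\ge 0$, which is guaranteed by the hypothesis $c\ge 0$ read off the boundary condition. For $t<1$ the factor $1+n(1-t)/(n-2)>0$ appearing inside $\tilde P^{ij}g_{ij}$ supplies a positive surplus in $\sum\tilde F^{ii}$ sufficient to close the argument without any sign hypothesis on $c$ and without dependence on $\sup_{M}1/f$, consistent with the remark following Theorem \ref{thm:global C2 estimate}.

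If $\Phi$ achieves its maximum in the interior of $M_{r}^{*}$, the pointwise second-derivative inequality becomes a polynomial inequality in $u_{nn}$ of bounded degree and forces $u_{nn}\le C$; if the maximum is attained at some $x_{0}\in\partial M$, the Hopf lemma yields $\partial_{n}\Phi(x_{0})\le 0$, and expanding this using $u_{n}=\varphi$ together with the boundary curvature identities derived earlier in the Fermi calculation isolates $u_{nn}(x_{0})$ with the favourable sign and delivers the stated bound.
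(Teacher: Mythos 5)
Your choice of auxiliary function is structurally wrong for this step, and the argument cannot close at a boundary maximum. If $\Phi$ contains $\nabla^{2}u(\nabla d,\nabla d)$ as its leading term, then at a maximum point $x_{0}\in\partial M$ the Hopf inequality $\partial_{n}\Phi(x_{0})\le 0$ produces the \emph{third} normal derivative $u_{nnn}(x_{0})$ plus lower-order terms. The Neumann condition $u_{n}=\varphi$ only controls tangential derivatives of $u_{n}$ (i.e.\ $u_{n\alpha}$, $u_{n\alpha\beta}$); it says nothing about $u_{nn}$ or $u_{nnn}$. Nor can you solve for $u_{nnn}$ from the once-differentiated equation $\tilde{P}^{ij}u_{ijn}=\cdots$ without dividing by $\tilde{P}^{nn}$, which for $t=1$ equals $\sigma_{1}(W^{\top})$ and has no positive lower bound (by \eqref{F_nn} it can be as small as $O(f^{2}/W_{n}^{n})$). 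This is precisely why the Lions--Trudinger--Urbas scheme --- and the paper --- builds the test function around the Neumann defect $u_{n}-\varphi$ rather than around $u_{nn}$: the paper takes $G=(u_{n}-c+h_{g})+(u_{n}-c+h_{g})^{7/5}-\tfrac12 Nd-N_{1}d_{g}^{2}(x_{0},\cdot)$ with $N=N_{2}+M^{\ast}$, so that $\partial_{n}G(x_{0})\le 0$ reads $(u_{nn}-\varphi_{n})(x_{0})-\tfrac12 N\le 0$ and directly isolates $u_{nn}(x_{0})\le N_{2}+2\|\varphi\|_{C^{1}}$; the choice $N=N_{2}+M^{\ast}$ is also what resolves the circularity you gesture at with ``self-consistent.'' All of the work then goes into excluding an interior maximum of $G$, where the $7/5$-power supplies the positive quadratic form $\tfrac{14}{25}A^{-3/5}P_{j}^{i}(u_{ni}+\cdots)(u_{n}^{j}+\cdots)$ that is the engine of the proof (Claims 1 and 2), combined with $c\ge 0$, the identity $f^{2}-\sigma_{2}(W^{\top})=\sigma_{1}(W^{\top})W_{n}^{n}-\sum_{\alpha}W_{n}^{\alpha}W_{\alpha}^{n}$, and a case analysis on the sign and size of $\sigma_{2}(W^{\top})$.

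Your interior case is also not closed. Applying $\tilde{P}^{ij}\partial_{i}\partial_{j}$ to $\nabla^{2}u(\nabla d,\nabla d)$ forces the twice-differentiated equation (fourth derivatives of $u$, already at odds with the $C^{3}$ hypothesis of the theorem), and the claim that the outcome is a ``polynomial inequality in $u_{nn}$ of bounded degree'' hides the entire difficulty: the good term you extract from $|\nabla u|^{2}$ is $2\tilde{P}^{ij}u_{ki}u_{kj}$, whose normal--normal contribution carries only the possibly degenerate weight $\tilde{P}^{nn}$ and not the full trace $\sum_{i}\tilde{F}^{ii}$, while the proposed cancellation of $A\,\tilde{P}^{ij}d_{ij}$ against ``a multiple of $\varphi\langle\nabla u,\nabla d\rangle$ contracted with boundary curvature'' is asserted, not derived --- this is exactly the term the paper must decompose in \eqref{eq:key4-1}--\eqref{eq:Neumann boundary condition}, where $c\ge 0$ and the positivity of $A=u_{n}-c+h_{g}$ at the contradiction point are used to absorb $\tfrac{N}{2}(W_{nn}+\sigma_{1}(W^{\top}))(n-1)h_{g}$ and $-\tfrac{N}{2}W_{\beta}^{\alpha}L_{\alpha}^{\beta}$. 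Without these two mechanisms made precise, the proposal does not constitute a proof.
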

	
	\begin{proof}
		Without loss of generality, we assume that 
		\[
		\sup_{\partial M}u_{nn}=u_{nn}(x_{0}):=M^{\ast}\qquad\mathrm{for~~some~~}\,x_{0}\in\partial M.
		\]
		In a tubular neighborhood near $\partial M$ we define 
		\begin{align*}
			G(x)= & \langle\nabla u,\nabla d_{g}(x,\partial M)\rangle-c(x,u)+h_{g}+\left(\langle\nabla u,\nabla d_{g}(x,\partial M)\rangle-c(x,u)+h_{g}\right)^{7/5}\\
			& -\frac{1}{2}Nd_{g}(x,\partial M)-N_{1}d_{g}^{2}(x_{0},x),
		\end{align*}
		where $N=N_{2}+M^{*}$, $N_{1},N_{2}\in\mathbb{R}_{+}$ are to be
		determined later and $N_{2}>N_{1}$. Let $x'\in \partial M$ is the point closest to $x$.  Here, $c(x,u)$ is the natural extension of $c(x,u)$ on $\partial M$ such that $c(x,u)=c(x',u(x))$,   and $h_{g}(x)$ is extended in the way that $h_{g}(x)=h_{g}(x')$. 
		
		It suffices to consider $G$ in $M_{r}:=\{x\in\overline{M};d_{g}(x,x_{0})<r\}$.
		Our goal is to show that with appropriate choice of $N_{1}$ and $N_{2}$,
		$x_{0}$ is a maximum point of $G$ in $\overline{M_{r}}$. If we
		admit this claim temporarily, then under Fermi coordinates around
		$x_{0}$ with $\nu=\pa_{x_{n}}$ we have 
		\begin{align*}
			0 & \ge G_{n}(x_{0})=(u_{nn}-\varphi_{n})(x_{0})[1+\frac{7}{5}(u_{n}-\varphi)^{2/5}(x_{0})]-\frac{1}{2}N\\
			& =(u_{nn}-\varphi_{n})(x_{0})-\frac{1}{2}N\\
			& =\frac{1}{2}u_{nn}(x_{0})-\varphi_{n}(x_{0})-\frac{1}{2}N_{2}.
		\end{align*}
		This gives $u_{nn}(x_{0})\le N_{2}+2\|\varphi\|_{C^{1}(\overline{M_{r}})}$.
		
		By contradiction, we assume 
		\[
		\max_{\overline{M_{r}}}G=G(x_{1})>0\quad\mathrm{~~for~~}\quad x_{1}\in\mathring{M}_{r}.
		\]
		
		We choose Fermi coordinates around $x_{0}$ such that 
		\[
		g=\ud x_{n}^{2}+g_{\alpha\beta}\ud x_{\alpha}\ud x_{\beta}\qquad\mathrm{~~in~~}B_{\rho}^{+}
		\]
		with $\nu=\pa_{x_{n}}$, meanwhile $W^{\top}=\big(W_{\beta}^{\alpha}\big)$
		is diagonal at $x_{1}$. For $x$ near $\partial M$, we let $d_{g}(x,x')=d_{g}(x,\partial M)$
		for some $x'\in\partial M$ and extend $c(x)=c(x'),h_{g}(x)=h_{g}(x')$
		for $x=(x',x_{n})\in B_{\rho}^{+}$. Moreover, there holds 
		\[
		(x_{n})_{,\alpha\beta}=-L_{\alpha\beta}(x')+O(x_{n})\qquad\mathrm{~~for~~}x\in B_{\rho}^{+}.
		\]
		
		For sufficiently small $r$, we choose $N_{1}$ such that 
		\begin{align}
			N_{1}r^{2}>\sup_{M_{r}}|\langle\nabla u,\nabla d_{g}(x,\partial M)\rangle-c(x,u)+h_{g}+(\langle\nabla u,\nabla d_{g}(x,\partial M)\rangle-c(x,u)+h_{g})^{7/5}|.\label{eq:condtion 0-1}
		\end{align}
		It is not hard to check that $G<0$ on $\{d_{g}(x,x_{0})=r\}\cap M$;
		$G(x_{0})=0$ and $G<0$ on $(\partial M\cap M_{r})\setminus\{x_{0}\}$.
		
		Fix $r$ such that $0<r<\rho$ and let $\eta(x)=d_{g}^{2}(x,x_{0})$,
		then there exists a positive constants $\delta=\delta(r)$ and $\mu=\mu(r)\leq r$
		such that 
		\[
		\|\eta\|_{C^{2}(\overline{M_{r}})}\leq\delta
		\]
		and $0\leq x_{n}\leq\mu$ for all $x\in M_{r}$.
		
		Until now the constant $N_{1}$ has been fixed, it remains to choose
		a sufficiently large $N$ depending on $r,N_{1},g$, $\sup_{M_{r}}(|c|+|\nabla u|+e^{-u})$,
		$\sup_{M_{r}}(f(x,u)+|f_{x}|+|f_{z}|)$ and $\sup_{M_{r}}\frac{1}{f(x,u)}$
		such that the maximum point of $G$ is $x_{0}$. Without confusion,
		the following constant $C$ may be different line by line and only
		depends on the above known data independent of $N$.
		
		It follows from Theorem \ref{Thm:reduction} that for each $1\le\alpha\le n-1$,
		\[
		u_{\alpha}^{\alpha}\le M^{*}+C
		\]
		 together with $\sigma_{1}(W)>0$, that is,
		\[
		-C\le u_{1}^{1}+\cdots+u_{n-1}^{n-1}+u_{nn}\le u_{\alpha}^{\alpha}+(n-1)M^{*}+C
		\]
		implying
		\[
		-u_{\alpha}^{\alpha}\le(n-1)M^{*}+C.
		\]
		Thus, we combine these two estimates to show
		\begin{equation}
			|u_{\alpha}^{\alpha}|\le(n-1)M^{*}+C\le(n-1)N+C.\label{eq:tangential control}
		\end{equation}
		
		Notice that $G(x_{1})>0$ implies 
		\begin{align*}
			& \big(u_{n}(x_{1})-c(x_1,u)+h_{g}(x_{1}')\big)(1+(u_{n}(x_{1})-c(x_1,u)+h_{g}(x_{1}'))^{2/5})\\
			> & \frac{1}{2}Nd_{g}(x_{1},\partial M)+N_{1}d_{g}^{2}(x_{0},x_{1})>0,
		\end{align*}
		which yields that $A:=u_{n}(x_{1})-c(x_1,u)+h_{g}(x_{1}')>0$.
		More precisely, we have 
		\begin{equation}
			A\ge\frac{\frac{1}{2}Nx_{n}+N_{1}d_{g}^{2}(x_{0},x_{1})}{1+A_{1}^{2/5}},\label{eq:lower bound of A}
		\end{equation}
		where $A_{1}=\sup_{M_{r}}(|\nabla u|+|h_{g}|+c)$.
		
		Near $x_{1}$ we have 
		\[
		G=u_{n}-c(x,u)+h_{g}+\big(u_{n}-c(x,u)+h_{g}\big)^{7/5}-\frac{1}{2}Nx_{n}-N_{1}d^{2}(x_{0},x),
		\]
		then for all $1\le i\le n$, at $x_{1}$ 
		\begin{align*}
			0= & G_{i}\\
			= & \big(u_{ni}+u^{\alpha}(x_{n})_{\alpha i}-(c-h_{g})_{i}\big)(1+\frac{7}{5}\big(u_{n}-c+h_{g}\big)^{\frac{2}{5}})-\frac{1}{2}N\delta_{ni}-2N_{1}d_{g}(x_{0},x)d_{g}(x_{0},x)_{i}.
		\end{align*}
		In particular, for large sufficiently $N$,  we have 
		\begin{equation}
			u_{nn}=\frac{\frac{1}{2}N+2N_{1}d_{g}(x_{0},x)d_{g}(x_{0},x)_{n}}{1+\frac{7}{5}A^{\frac{2}{5}}}+\varphi_{n}\ge\frac{\frac{1}{4}N}{1+\frac{7}{5}A^{\frac{2}{5}}},\label{lbd:u_nn}
		\end{equation}
		and 
		\begin{equation}
			u_{n\alpha}=-u^{\beta}(x_{n})_{\beta\alpha}+\frac{2N_{1}d_{g}(x_{0},x)d_{g}(x_{0},x)_{\alpha}}{1+\frac{7}{5}A^{\frac{2}{5}}}+\varphi_{\alpha}.\label{u_n alpha}
		\end{equation}
		
		In the following computations, we find it convenient to introduce: 
		\begin{equation}\label{new_def_F_i^j}
			F_{j}^{i}:=\frac{\pa\sigma_{2}(W)}{\pa W_{i}^{j}},\quad F_{j}^{i}=g^{ik}F_{kj},\, \mathrm{and}\quad	P^{i}_j=F^{i}_j+\frac{1-t}{n-2}\sum_{k=1}^{n}F^{k}_k\delta^{i}_j.
		\end{equation}
		
		Since $\lambda(W)\in\Gamma_{2}^{+}$, it follows from Theorem \ref{Thm:reduction},
		the choice of $N$ and (\ref{lbd:u_nn}) that 
		\begin{equation}
			|W_{\beta}^{\alpha}(x_{1})|\le\sigma_{1}(W)(x_{1})\leq Cu_{nn}(x_{0})\le Cu_{nn}(x_{1})\leq CW_{n}^{n}(x_{1})\le CN.\label{eq:doubel tangential initial bound}
		\end{equation}
		This directly implies that 
		\[
		\sum_{i}F_{i}^{i}\leq CW_{n}^{n}\qquad\mathrm{~~at~~}x_{1}.
		\]
		
		As $(G_{ij})$ is semi-negative definite at $x_{1}$,  we have
		\begin{align}
			0\ge & P_{j}^{i}G_{i}^{j}\label{eq:matrix non-negative}\\
			\ge & \frac{14}{25}A^{-\frac{3}{5}}P_{j}^{i}(u_{ni}+u^{k}(x_{n})_{ki}-\varphi_{i})(u_{n}^{j}+u^{l}(x_{n})_{l}^{j}-\varphi^{j})\no\\
			& +(1+\frac{7}{5}A^{\frac{2}{5}})P_{j}^{i}\left(u^{\beta j}(x_{n})_{\beta i}+u^{k}(x_{n})_{ki}^{j}+u_{i}^{\beta}(x_{n})_{\beta}^{j}\right)\no\\
			& +P_{j}^{i}(u_{ni}^{\quad j}-\varphi_{i}^{j})(1+\frac{7}{5}A^{\frac{2}{5}})-\frac{N}{2}P_{j}^{i}(x_{n})_{i}^{j}-\delta N_{1}\sum_{i}F_{i}^{i}.\no
		\end{align}
		In the following, we use  $O(1)$  to denote the quantity depending on $|u|_{C^1}$, $g$.
		
		By the definition of $\sigma_2(W)$,  
		\begin{align}\label{F_alphan}
			F_{\alpha}^{n}=-W_{\alpha}^{n}=-(u_{n\alpha}+u_{n}u_{\alpha}+A_{n\alpha}^{t})=O(1)\quad \text{and}\quad P_{\alpha}^{n}=O(1),
		\end{align}
		\begin{equation}\label{F_nn}
			F_{n}^{n}=\sigma_{1}(W^{\top})=\sum_{\alpha}W_{\alpha}^{\alpha}=\frac{f^{2}-\sigma_{2}(W^{\top})+\sum_{\alpha}W_{n}^{\alpha}W_{\alpha}^{n}}{W_{n}^{n}},
		\end{equation}
		and 
		\begin{equation}\label{F_betaalpha}
			F_{\beta}^{\alpha}=\sigma_1(W)\delta_{\beta}^{\alpha}-W^{\alpha}_{\beta}.
		\end{equation}
		Observe that 
		\begin{align*}
			W_{nn} & =(1-t)\frac{\Delta u}{n-2}+u_{nn}+O(1)\\
			& =\frac{\frac{1}{2}N+2N_{1}d(x_{0},x_{1})d(x_{0},x)_{n}}{1+\frac{7}{5}A^{\frac{2}{5}}}+(1-t)\frac{\Delta u}{n-2}+O(1),
		\end{align*}
		and		
		\[
		\sigma_{1}(W)=(1+\frac{n(1-t)}{n-2})\Delta u+O(1)
		\]
		
		By
		\begin{align}
			& \sum_{i}F_{i}^{i}=(n-1)\sigma_{1}(W)\label{bdd_trace_F-1-1}\\
			> & (n-1)W_{nn}\no\\
			\geq & (n-1)u_{nn}+(1-t)\frac{(n-1)\Delta u}{n-2}-C\no\\
			\ge & \frac{\frac{1}{4}N(n-2)}{1+\frac{7}{5}A^{\frac{2}{5}}},\no
		\end{align}		
		and the  estimates \eqref{lbd:u_nn}\eqref{u_n alpha}\eqref{F_alphan}\eqref{F_betaalpha}\eqref{F_nn}, we have
		\begin{equation}\label{est:important-1}
			|F_{\beta}^{\alpha}u_{n}^{\beta}|+|F_{n}^{\alpha}u_{n}^{n}|+|F_{\alpha}^{n}u_{n}^{\alpha}|\le  C\sum_{i}F_{i}^{i}.
		\end{equation}
		%
		
		Differentiating the equation as (\ref{eq:first derivative to the equation}), combined with \eqref{est:important-1},  we obtain 
		\begin{align}
			& P_{j}^{i}(u_{ni}^{\quad j}-\varphi_{i}^{j})\label{eq:third derivative}\\
			= & P_{j}^{i}u_{in}^{j}+P_{j}^{i}R_{nil}^{j}u^{l}-P_{j}^{i}\varphi_{i}^{j}\nonumber \\
			= & 2ff_{n}-2F_{n}^{n}u_{n}u_{n}^{n}-2F_{n}^{\alpha}u_{\alpha}u_{n}^{n}-2F_{\alpha}^{n}u_{n}u_{n}^{\alpha}-2F_{\beta}^{\alpha}u_{\alpha}u_{n}^{\beta}\nonumber \\
			& +(2-t)(\sum_{i}{F}_{i}^{i})u^{n}u_{nn}+(2-t)(\sum_{i}{F}_{i}^{i})u^{\alpha}u_{\alpha n}-F_{j}^{i}A_{i,n}^{j}+F_{j}^{i}R_{nil}^{j}u^{l}\nonumber \\
			& -P_{j}^{i}(c_{x_{i}}^{j}+c_{x_{i}z}u^{j}+c_{z}^{j}u_{i}+c_{zz}u_{i}u^{j}-(h_{g})_{i}^{j})\nonumber \\
			& -c_zP_{j}^{i}(W_{i}^{j}-\frac{1-t}{n-2}\Delta u \delta_i^j-u_{i}u^{j}+\frac{2-t}{2}|\nabla u|^{2}\delta_{i}^{j}-A_{i}^{j})e^{-u}\nonumber \\
			\ge & \bigg((n-1)(2-t)W_{n}^{n}+[(n-3)+(1-t)(n-1)]\sigma_{1}(W^{\top})\bigg)u_{n}u_{nn}\no\\
			&+c_zP_{i}^{i}\frac{1-t}{n-2}\Delta u e^{-u}-C\sum_{i}F_{i}^{i},\nonumber 
		\end{align}
		where 	\[
		\varphi_{i}^{j}=c(x,u)_{i}^{j}=(c_{x_{i}}+c_{z}u_{i})^{j}=c_{x_{i}}^{j}+c_{x_{i}z}u^{j}+c_{z}^{j}u_{i}+c_{zz}u_{i}u^{j}+c_{z}u_{i}^{j}-(h_{g})_{i}^{j}.
		\]
		By (\ref{lbd:u_nn}) we have 
		\begin{align}
			& (2-t)(n-1)W_{n}^{n}u_{n}u_{nn}(1+\frac{7}{5}A^{\frac{2}{5}})\no\\
			= & (2-t)(n-1)W_{n}^{n}u_{n}\bigg(\frac{1}{2}N+2N_{1}d_{g}(x_{0},x)d_{g}(x_{0},x)_{n}+(1+\frac{7}{5}A^{2/5})\varphi_{n}\bigg)\no\\
			\ge & (2-t)\frac{n-1}{2}W_{n}^{n}Nu_{n}-CW_{n}^{n}.\label{key5-1}
		\end{align}
		
		Also	\begin{align}
			&-\frac{N}{2}P_{j}^{i}(x_{n})_{i}^{j}\\
			= & -\frac{N}{2}P_{\beta}^{\alpha}(x_{n})_{\alpha}^{\beta}=\frac{N}{2}P_{\beta}^{\alpha}(L_{\alpha}^{\beta}(x_1')+O(x_{n}))\no\label{eq:key4-1}\\
			= & \frac{N}{2}(W_{nn}+\sigma_{1}(W^{\top}))\cdot(n-1)h_{g}+\frac{N}{2}\frac{1-t}{n-2}F_{k}^{k}(n-1)h_{g}-\frac{N}{2}W_{\beta}^{\alpha}L_{\alpha}^{\beta}-C(\sum_{i}F_{i}^{i})Nx_{n}.
		\end{align}
		And then		
		\begin{equation}\label{non-negative c}
			\begin{aligned}
				& (2-t)(n-1)W_{n}^{n}u_{n}u_{nn}(1+\frac{7}{5}A^{\frac{2}{5}})+ \frac{N}{2}(W_{nn}+\sigma_{1}(W^{\top}))\cdot(n-1)h_{g}\\
				\ge &\frac{N(n-1)}{2}W_{n}^{n}(u_{n}+h_{g})+ (1-t)\frac{n-1}{2}W_{n}^{n}Nu_{n}+\frac{N}{2}\sigma_{1}(W^{\top})\cdot(n-1)h_{g}-CW^n_n\\
				\ge & (A+c(x_{1},u))\frac{N(n-1)}{2}W_{n}^{n}+ (1-t)\frac{n-1}{2}W_{n}^{n}Nu_{n}+\frac{N}{2}\sigma_{1}(W^{\top})\cdot(n-1)h_{g}-CW^n_n.
			\end{aligned}
		\end{equation}	
		By (\ref{eq:key4-1})(\ref{eq:third derivative})(\ref{non-negative c}) we have
		\begin{align}
			& -\frac{N}{2}P_{j}^{i}(x_{n})_{i}^{j}+(1+\frac{7}{5}A^{\frac{2}{5}})P_{j}^{i}(u_{ni}^{\quad j}-\varphi_{i}^{j})\no\\
			\ge & (1-t)\frac{n-1}{2}W_{n}^{n}Nu_{n}+\frac{N}{2}\sigma_{1}(W^{\top})\cdot(n-1)h_{g}+\frac{AN(n-2)}{2}W^n_n-\frac{N}{2}W_{\beta}^{\alpha}L_{\alpha}^{\beta}\label{eq:Neumann boundary condition}\\
			& +[(n-3)+(1-t)(n-1)]\sigma_{1}(W^{\top})u_{n}u_{nn}(1+\frac{7}{5}A^{\frac{2}{5}})+\frac{N}{2}\frac{1-t}{n-2}F_{k}^{k}(n-1)h_{g}\no\\
			&+c_z(1+\frac{7}{5}A^{\frac{2}{5}})\frac{1-t}{n-2}\Delta u e^{-u}P_{i}^{i}-C\sum_{i}F_{i}^{i}\nonumber 
		\end{align}
		where \eqref{eq:Neumann boundary condition} holds due to $c(x_1,u)\ge0$ and $A\frac{N}{2}W^n_n-C(\sum_{i}F_{i}^{i})Nx_{n}>0$ for (\ref{eq:lower bound of A}). 
		
		By \eqref{eq:Neumann boundary condition}\eqref{eq:tangential control} we have
		\begin{equation}\label{eq:for sigma_1(W^T) big use}
			-\frac{N}{2}P_{j}^{i}(x_{n})_{i}^{j}+(1+\frac{7}{5}A^{\frac{2}{5}})P_{j}^{i}(u_{ni}^{\quad j}-\varphi_{i}^{j})\ge -CW_{nn}^{2}.
		\end{equation}

	Denote $\sigma_{1}(W|W_{\alpha}^{\alpha})=\sigma_{1}(W)-W_{\alpha}^{\alpha}.$
			
			Also as
		\begin{align*}
			P_{j}^{\gamma}u^{\beta j}L_{\beta\gamma} & =P_{n}^{\gamma}u^{\beta n}L_{\beta\gamma}+P_{\zeta}^{\gamma}u^{\beta\zeta}L_{\beta\gamma}\\
			& =P_{n}^{\gamma}u^{\beta n}L_{\beta\gamma}+(\sigma_{1}(W)\delta_{\zeta}^{\gamma}-W_{\zeta}^{\gamma})u^{\beta\zeta}L_{\beta\gamma}+\frac{1-t}{(n-2)}F_{k}^{k}u_{\gamma}^{\beta}L_{\beta}^{\gamma}\\
			& =O(N_{1}^{2})+\sigma_{1}(W|W_{\alpha}^{\alpha})u_{\alpha}^{\beta}L_{\beta}^{\alpha}+\frac{1-t}{(n-2)}F_{k}^{k}u_{\gamma}^{\beta}L_{\beta}^{\gamma}
		\end{align*}
		and then
		\begin{align}\label{extra boundary term}
			& (1+\frac{7}{5}A^{\frac{2}{5}})P_{j}^{i}\left(u^{\beta j}(x_{n})_{\beta i}+u^{k}(x_{n})_{ki}^{j}+u_{i}^{\beta}(x_{n})_{\beta}^{j}\right)\\
			\ge & -2(1+\frac{7}{5}A^{\frac{2}{5}})\sigma_{1}(W|W_{\alpha}^{\alpha})u_{\alpha}^{\beta}L_{\beta}^{\alpha}\nonumber\\
			&-\frac{2(1-t)}{(n-2)}(1+\frac{7}{5}A^{\frac{2}{5}})F_{k}^{k}u_{\gamma}^{\beta}L_{\beta}^{\gamma}-CF_{k}^{k}-CN_{1}^{2}.\no
		\end{align}
		\vskip 4pt
		
		We next present two claims, postponing their proof to the end of proof of this theorem.\\

		\noindent\fbox{\begin{minipage}[t]{1\columnwidth-2\fboxsep-2\fboxrule}
				\begin{claim}[1]	
					\begin{align*}
						& P_{j}^{i}(u_{ni}+u^{k}(x_{n})_{ki}-\varphi_{i})(u_{n}^{j}+u^{l}(x_{n})_{l}^{j}-\varphi^{j})\no\\
						\ge & \frac{1-t}{n-2}F_{k}^{k}u_{ni}u_{n}^{i}+u_{nn}\sigma_{1}(W^{\top})u_{nn}-CW_{nn}^{2}.
					\end{align*}
				\end{claim}
		\end{minipage}}
		$\vspace{2pt}$
		
		The proof of Claim (1) relies on \eqref{eq:tangential control}\label{eq:for sigma_1(W^T) big use} and can be employed to obtain an upper bound for $\sigma_{1}(W^{\top})(x_1)$. 
			By \eqref{eq:matrix non-negative}(\ref{eq:for large sigma_1(W^T) large use})(\ref{eq:tangential control}) and Claim (1),
		we also have the following estimate:
		\begin{align}
			0 & \ge P_{j}^{i}G_{i}^{j}\label{eq:sigma_1(W^T) large}\\
			& \ge \frac{14}{25}A^{-\frac{3}{5}}\frac{1-t}{(n-2)}F_{k}^{k}u_{ni}u_{n}^{i}+ \frac{14}{25}A^{-\frac{3}{5}}\left(u_{nn}\sigma_{1}(W^{\top})u_{nn}-Cu_{nn}^{2}\right)-Cu_{nn}^2\nonumber \\
			& \ge\big( \frac{14}{25}A_1^{-\frac{3}{5}}\sigma_{1}(W^{\top})-C_{3}\big)u_{nn}^{2},\no
		\end{align}
		where $C_{3}$ is a positive constant depending on $g$, $c$, $\sup_{M_{r}}e^{-u}$
		and $\|\nabla u\|_{C^{0}(\overline{M_{r}})}.$
		
		This directly yields
		\begin{equation}\label{upper bound of C3}
			\sigma_{1}(W^{\top})(x_1)\le C_{3}.
		\end{equation}
		
		This upper bound serves as an additional algebraic condition for later discussion. However, Claim (1) alone is insufficient for $t=1$. To this end, we need the following 
		\medskip

		\noindent\fbox{\begin{minipage}[t]{1\columnwidth-2\fboxsep-2\fboxrule}
				\begin{claim}[2]
					\begin{align}\label{eq:key estimate}
						&P_{j}^{i}(u_{ni}+u^{k}(x_{n})_{ki}-\varphi_{i})(u_{n}^{j}+u^{l}(x_{n})_{l}^{j}-\varphi^{j})\\
						\ge & u_{nn}\bigg(f-\sigma_{2}(W^{\top})-C\sigma_{1}(W^{\top})\bigg)\nonumber \\
						& +\frac{1-t}{2(n-2)}F_{k}^{k}u_{ni}u_{n}^{i}-\frac{(1-t)}{n-2}\Delta u\sigma_{1}(W^{\top})u_{nn}-C(1-t)F_{k}^{k}-C\sum_{\alpha}|W_{\alpha}^{\alpha}|-C.\nonumber 
					\end{align}
				\end{claim}
		\end{minipage}}
		\vskip 4pt

		For any fixed positive constant $L$, taking $N$ large such that 
		\begin{align}
			&\frac{1}{L(n-2)}F_{k}^{k}u_{ni}u_{n}^{i}\label{the chioce of N}\\
			\ge& 2(A_{1}^{^{3/5}}+1)\bigg\{  \frac{n-1}{2}|W_{n}^{n}Nu_{n}|+(2-t)\frac{(n-1)}{2(n-2)}|\Delta uu_{n}N|r\no\\
			&+(n-1)|\sigma_{1}(W^{\top})u_{n}u_{nn}|
			 +\frac{N}{2}\frac{1}{n-2}F_{k}^{k}|(n-1)h_{g}|+\frac{2}{(n-2)}(1+\frac{7}{5}A^{\frac{2}{5}})F_{k}^{k}|u_{\gamma}^{\beta}L_{\beta}^{\gamma}|\no\\
			&+|c_z(1+\frac{7}{5}A^{\frac{2}{5}})\frac{1-t}{n-2}\Delta u e^{-u}P_{i}^{i}|+\frac{(1-t)}{(n-2)}|\sigma_1(W^{\top})\Delta u u_{nn}|\bigg\}-CF_{k}^{k}.\no
		\end{align}
		We point out that the left hand side of (\ref{the chioce of N}) is about $N^3$ and the right hand side is about $N^2$ by virtue of \eqref{eq:tangential control}. 		
		Therefore, collecting all estimates \eqref{eq:matrix non-negative}\eqref{extra boundary term}\eqref{eq:Neumann boundary condition}\eqref{eq:key estimate}  and \eqref{the chioce of N} together,  we obtain 
		\begin{align}
			0\ge & P_{j}^{i}G_{i}^{j}\label{eq:final 1}\\
			\ge & \frac{14}{25}A^{-\frac{3}{5}}\big[u_{nn}\left(f^{2}-\sigma_{2}(W^{\top})-C_{4}\sigma_{1}(W^{\top})\right)-C\sum_{\alpha=1}^{n-1}|W_{\alpha}^{\alpha}|\big]\no\\
			& +\frac{14}{25}A^{-\frac{3}{5}}\bigg(\frac{1-t}{4(n-2)}F_{k}^{k}u_{ni}u_{n}^{i}-\frac{(1-t)}{n-2}\Delta u\sigma_{1}(W^{\top})u_{nn}\bigg)+A\frac{N(n-2)}{2}W_{n}^{n}\no\\
			&-2(1+\frac{7}{5}A^{\frac{2}{5}})\sigma_{1}(W|W_{\alpha}^{\alpha})u_{\alpha}^{\beta}L_{\beta}^{\alpha} -CN\sum_{\alpha}|W^{\alpha}_{\alpha}|-CF_{k}^{k}.\no
		\end{align}
		We remark that  \eqref{eq:final 1} is independent of \eqref{eq:tangential control} when $t=1$.
		By \eqref{upper bound of C3} we have
		
		\begin{equation}
			\frac{1-t}{4(n-2)}F_{k}^{k}u_{ni}u_{n}^{i}-\frac{(1-t)}{n-2}\Delta u\sigma_{1}(W^{\top})u_{nn}>0.
		\end{equation}
		
		Then we know from \eqref{eq:final 1}	 that
		\begin{align}
			0\ge & P_{j}^{i}G_{i}^{j}\no\label{eq:final 2}\\
			\ge &\mathcal{P}+A\frac{N(n-2)}{2}W_{n}^{n},
		\end{align}
		where 
		\begin{align}
			\mathcal{P}:=&\frac{14}{25}A^{-\frac{3}{5}}\big[u_{nn}\left(f^{2}-\sigma_{2}(W^{\top})-C_{4}\sigma_{1}(W^{\top})\right)-C\sum_{\alpha=1}^{n-1}|W_{\alpha}^{\alpha}|\big]\no\\
			& -CN\sum_{\alpha}|W^{\alpha}_{\alpha}|-CF_{k}^{k}.\no
		\end{align}
		%

		\vskip 4pt 
		\emph{Case 1:} $\sigma_{2}(W^{\top})(x_{1})\ge0$. \vskip
		4pt
		
		By definition of $\sigma_2(W)$ and \eqref{u_n alpha} we have
		\[
		0<\sigma_{1}(W^{\top})\leq C\frac{N_{1}^{2}}{W_{n}^{n}}\quad\mathrm{~~and~~}\quad\sigma_{2}(W^{\top})\le\frac{(n-2)\sigma_{1}(W^{\top})^{2}}{2(n-1)}\le C\frac{N_{1}^{4}}{(W_{n}^{n})^{2}}.
		\]

		Actually, now $\lambda(W^{\top})\in\Gamma_{2}^{+}$ implies 
		\begin{equation}
			|W_{\beta}^{\alpha}|\le\sigma_{1}(W^{\top})\le C\frac{N_{1}^{2}}{W_{n}^{n}}.\label{CaseAtangentialbound}
		\end{equation}
		With a choice of large $N$, by \eqref{CaseAtangentialbound} and
		\eqref{lbd:u_nn} we have 
		%
		\begin{equation}
			u_{nn}\big(f^{2}-\sigma_{2}({W}^{\top})-C_{4}\sigma_{1}({W}^{\top})-\frac{C_{4}\sum_{\alpha=1}^{n-1}|W_{\alpha}^{\alpha}|}{u_{nn}}\big)\ge\frac{13}{14}f^{2}W_{nn}
		\end{equation}
		and 
		\begin{equation}
			\mathcal{P}\ge\frac{13}{25}f^{2}W_{nn}A^{-\frac{3}{5}}-C-C\sum_{i}{F}_{i}^{i}.\label{CaseA-P}
		\end{equation}
		We have used the following fact in \eqref{CaseA-P}:	\begin{equation}
			\frac{13}{25}A^{-\frac{3}{5}}W_{nn}f^{2}+\frac{1}{2}NW_{n}^{n}A\ge\frac{1}{2}W_{n}^{n}N^{\frac{3}{8}}f^{\frac{5}{4}}\ge\frac{1}{2}W_{n}^{n}N^{\frac{3}{8}}\inf_{M_{r}}f^{\frac{5}{4}},\label{eq:lower bound of big term}
		\end{equation}
		where the first inequality follows by regarding the left hand side
		as a function of $A$ and seeking its minimum value.
		
		Hence, we combine (\ref{eq:final 1}) (\ref{eq:lower bound of big term})
		and (\ref{CaseA-P}) to show 
		\begin{align*}
			0\ge & P_{j}^{i}G_{i}^{j}\\
			\ge & \frac{1}{2}W_{n}^{n}N^{\frac{3}{8}}\inf_{M_{r}}f^{\frac{5}{4}}-C-C\sum_{i}{F}_{i}^{i}>0,
		\end{align*}
		which yields a contradiction by choosing a larger $N$. We emphasize
		that such an $N$ depends on $\sup_{M_{r}}\frac{1}{f(x,u)}$.
		
		\vskip 4pt \emph{Case 2:} $\sigma_{2}(W^{\top})(x_{1})<0$. \vskip
		4pt
		
		Without loss of generality, we assume that $W_{1}^{1}\ge W_{2}^{2}\ge\cdots\geq W_{n-1}^{n-1}$
		and then we have $W_{1}^{1}>0$ and $W_{n-1}^{n-1}<0$ due to $\sigma_{1}(W^{\top})(x_{1})>0$
		and $\sigma_{2}(W^{\top})(x_{1})<0$.
		
		By $0<\sigma_{1}(W^{\top})=W_{1}^{1}+\cdots+W_{n-1}^{n-1}$, 
		\[
		0<-W_{n-1}^{n-1}\le W_{1}^{1}+\cdots+W_{n-2}^{n-2}\le(n-2)W_{1}^{1}
		\]
		and thus 
		\begin{equation}
			|W_{\alpha}^{\alpha}|\le(n-2)W_{1}^{1}\qquad\mathrm{~~for~~}1\le\alpha\le n-1.\label{the upper bound of W^T by W11}
		\end{equation}
		
		Let $$C^{*}=C_{3}+1.$$
		From (\ref{upper bound of C3}), 
		$$\sigma_{1}(W^{\top})\le C^{*}.$$
		
		
		\emph{Case 2.1:} $W_{1}^{1}>C^{**}:=2C^{\ast}+2\sqrt{(C^{\ast})^{2}+C_{4}C^{\ast}+C_{5}}+5\sqrt{C_{6}}A_{1}^{\frac{3}{10}}+25C_{6}A_{1}^{\frac{3}{5}}$,
		where $C_{5},C_{6}$ are positive constants depending on $g$ and
		$n$.
		
		By 
		\[
		0<W_{1}^{1}+\sigma_{1}(W^{\top}|W_{1}^{1})\le C^{*},
		\]
		we have $-W_{1}^{1}\le\sigma_{1}(W^{\top}|W_{1}^{1})\leq C^{*}-W_{1}^{1}<0$
		by virtue of the choice of $C^{\ast\ast}$.
		
		Combining with the following inequality:
		\[
		-C^{*}W_{n}^{n}\le f^{2}-\sigma_{1}(W^{\top})W_{n}^{n}+\sum_{\alpha}W_{n}^{\alpha}W_{\alpha}^{n}=\sigma_{2}(W^{\top})<0,
		\]
		we obtain 
		\begin{align}
			-C^{*}W_{n}^{n}\le &\sigma_{2}(W^{\top})=W_{1}^{1}\sigma_{1}(W^{\top}|W_{1}^{1})+\sigma_{2}(W^{\top}|W_{1}^{1})\no\\
			\le &W_{1}^{1}(C^{*}-W_{1}^{1})+\frac{n-3}{2(n-2)}\sigma_{1}(W^{\top}|W_{1}^{1})^{2}\no\\
			\le&C^{*}W_{1}^{1}-\frac{n-1}{2(n-2)}(W_{1}^{1})^{2}.\label{polynomialofW11}
		\end{align}
		
		This yields 
		\begin{equation}
			\sqrt{\frac{2(n-2)}{n-1}}\sqrt{C^{*}W_{n}^{n}+\frac{(C^{*})^{2}(n-2)}{2(n-1)}}+\frac{C^{*}(n-2)}{n-1}\ge W_{1}^{1}>0.\label{eq:subcase B.2 the bound of W^11}
		\end{equation}
		By \eqref{the upper bound of W^T by W11} we have 
		\begin{equation}
			|W_{\alpha}^{\alpha}|\le(n-2)\sqrt{\frac{2(n-2)}{n-1}}\sqrt{C^{*}W_{n}^{n}+\frac{(C^{*})^{2}(n-2)}{2(n-1)}}+\frac{C^{*}(n-2)^{2}}{n-1}.\label{tangential W_alphaalpha}
		\end{equation}
		
		By \eqref{polynomialofW11} and \eqref{the upper bound of W^T by W11},
		we have 
		\begin{align}
			& f^{2}-\sigma_{2}({W}^{\top})-C_{4}\sigma_{1}({W}^{\top})-\frac{C_{4}\sum_{\alpha=1}^{n-1}|W_{\alpha}^{\alpha}|}{u_{nn}}\no\label{lower-positive}\\
			\ge & f^{2}+\frac{n-1}{2(n-2)}(W_{1}^{1})^{2}-C^{*}W_{1}^{1}-C_{4}C^{\ast}-C_{5}\no\\
			\ge & f^{2}+\frac{n-1}{4(n-2)}(W_{1}^{1})^{2},
		\end{align}
		where the last inequality follows from the choice of large $C^{\ast\ast}$ such that $W^1_1\ge C^{**}$.
		
		By \eqref{lower-positive} and \eqref{the upper bound of W^T by W11},
		we have 
		\begin{align*}
			\mathcal{P\ge} & \frac{12}{25}A^{-\frac{3}{5}}W_{nn}\left(f^{2}+\frac{n-1}{4(n-2)}(W_{1}^{1})^{2}\right)\\
			& -C_{6}W_{1}^{1}W_{nn}-C_{6}W_{nn}.
		\end{align*}
		
		For sufficiently large $N$ and the choice of $C^{\ast\ast}$ we arrive
		at a contradiction via 
		\begin{align}
			0\ge & P_{j}^{i}G_{i}^{j}\no\label{eq:final subcase B.2}\\
			\ge & \frac{12}{25}A^{-\frac{3}{5}}W_{nn}\left(f^{2}+\frac{n-1}{4(n-2)}(W_{1}^{1})^{2}\right)\nonumber \\
			& -C_{6}W_{1}^{1}W_{nn}-C_{6}W_{nn}+\frac{n-2}{2}NW_{n}^{n}A>0.
		\end{align}
		
		\emph{Case 2.2:} $W_{1}^{1}\le C^{**}=2C^{\ast}+2\sqrt{(C^{\ast})^{2}+C_{4}C^{\ast}+C_{5}}+5\sqrt{C_{6}}A_{1}^{\frac{3}{10}}+25C_{6}A_{1}^{\frac{3}{5}}$.
		
		It follows from \eqref{the upper bound of W^T by W11} that 
		\begin{equation}
			|W_{\alpha}^{\alpha}|\le C\quad\text{for}\quad1\le\alpha\le n-1,\label{upper bound of double in the last case}
		\end{equation}
		and then 
		\[
		\sigma_{1}(W^{\top})=\frac{f^{2}-\sigma_{2}(W^{\top})+\sum_{\alpha}W_{n}^{\alpha}W_{\alpha}^{n}}{W_{n}^{n}}\le\frac{C}{W_{n}^{n}}.
		\]
		From above, choosing $N$ large enough such that 
		\begin{align*}
			& u_{nn}\big(f^{2}-\sigma_{2}({W}^{\top})-C_{4}\sigma_{1}({W}^{\top})-\frac{C_{4}\sum_{\alpha=1}^{n-1}|W_{\alpha}^{\alpha}|}{u_{nn}}\big)\\
			> & u_{nn}\big(f^{2}-\frac{C_{7}}{W_{n}^{n}}\big)>\frac{13}{14}W_{n}^{n}\inf f^{2}
		\end{align*}
		and then 
		\begin{equation}
			\mathcal{P}\ge\frac{13}{25}f^{2}W_{nn}A^{-\frac{3}{5}}-C-C\sum_{i}{F}_{i}^{i}.\label{CaseB-P lastsubcase}
		\end{equation}
		By (\ref{eq:final 2})(\ref{CaseB-P lastsubcase})(\ref{eq:lower bound of big term})
		we know that 
		\begin{align*}
			0 & \ge P_{j}^{i}G_{i}^{j}\\
			&\ge\frac{13}{25}W_{n}^{n}A^{-3/5}\inf_{M_{r}}f^{2}+\frac{(n-2)}{2}NW_{n}^{n}A-C\sum_{i}{F}_{i}^{i}\\
			&\ge\frac{1}{2}W_{n}^{n}N^{\frac{3}{8}}\inf_{M_{r}}f^{\frac{5}{4}}-C\sum_{i}{F}_{i}^{i}>0.
		\end{align*}
		This is a contradiction.
		Therefore, the maximum point can only happen at the boundary
		and thus at $x_{0}$. 
		
\medskip
		In the following,  we give the proof of  {\bf Claim 1} and  {\bf Claim 2}.	
\medskip		
		For brevity, we let $\widetilde{\varphi}_{i}=\varphi_{i}-u^{k}(x_{n})_{ki}$
		and write 
		\begin{align*}
			J:=& P_{j}^{i}(u_{ni}+u^{k}(x_{n})_{ki}-\varphi_{i})(u_{n}^{j}++u^{l}(x_{n})_{l}^{j}-\varphi^{j})\\
			= & P_{j}^{i}\widetilde{\varphi}_{i}\widetilde{\varphi}^{j}-2P_{j}^{i}u_{ni}\widetilde{\varphi}^{j}+P_{j}^{i}u_{ni}u_{n}^{j}\\
			=:&J_1+J_2+J_3.
		\end{align*}
		Notice that 
		\begin{align}\label{eq:varphi}
			J_1:=&P_{j}^{i}\widetilde{\varphi}_{i}\widetilde{\varphi}^{j}\no\\
			\ge & \big(\sigma_{1}(W^{\top})+\frac{1-t}{n-2}\sum_{k}F_{k}^{k}\big)\widetilde{\varphi}_{n}^{2}+\big[u_{nn}+(1-t)\frac{(n-1)\Delta u}{n-2}+\frac{1-t}{n-2}\sum_{k}F_{k}^{k}\big]\widetilde{\varphi}_{\alpha}\widetilde{\varphi}^{\alpha}\no\\
			&-C\sum_{\alpha}|u_{\alpha}^{\alpha}|-\sum_{\alpha}|u_{n\alpha}|.
		\end{align}
		Using $F_{\alpha}^{\beta}u_{n\beta}=\sigma_{1}(W)u_{n\alpha}-W_{\alpha}^{\beta}u_{n\beta}=(1+\frac{n(1-t)}{n-2})u_{nn}+C\sum_{\alpha}|W^{\alpha}_{\alpha}|+C\sum_{\alpha}|u^{\alpha}_{\alpha}|$, we obtain
		\begin{align*}
			& 2{W}_{\alpha}^{n}u_{nn}\widetilde{\varphi}^{\alpha}-2\bigg(F_{\alpha}^{\beta}+\frac{1-t}{n-2}\sum_{k}F_{k}^{k}g_{\alpha}^{\beta}\bigg)u_{n\beta}\widetilde{\varphi}^{\alpha}\\
			= & 2\widetilde{\varphi}^{\alpha}\bigg(u_{nn}(u_{n\alpha}+u_{n}u_{\alpha}+A_{n\alpha}^{t})-(F_{\alpha}^{\beta}+\frac{1-t}{n-2}\sum_{k}F_{k}^{k}g_{\alpha}^{\beta})u_{n\beta}\bigg)\\
			\ge & 2\widetilde{\varphi}^{\alpha}u_{nn}(u_{n}u_{\alpha}+A_{n\alpha}^{t})-\frac{2n(1-t)}{n-2}\widetilde{\varphi}^{\alpha}u_{nn}u_{n\alpha}-\frac{2(1-t)}{n-2}\sum_{k}F_{k}^{k}u_{n\alpha}\widetilde{\varphi}^{\alpha}-C\sum_{\alpha}|W_{\alpha}^{\alpha}|.
		\end{align*}
		Thus, 
		\begin{align}\label{eq:mixed term}
			&J_2:=-2P_{j}^{i}u_{ni}\widetilde{\varphi}^{j}\\
			= & -2\big(\sigma_{1}({W}^{\top})+\frac{1-t}{n-2}\sum_{k}F_{k}^{k}\big)u_{nn}\widetilde{\varphi}^{n}+2{W}_{n}^{\alpha}u_{n\alpha}\widetilde{\varphi}^{n}+2{W}_{\alpha}^{n}u_{nn}\widetilde{\varphi}^{\alpha}-2\bigg(F_{\alpha}^{\beta}+\frac{1-t}{n-2}\sum_{k}F_{k}^{k}g_{\alpha}^{\beta}\bigg)u_{n\beta}\widetilde{\varphi}^{\alpha} \nonumber\\
			\geq & -2\big(\sigma_{1}({W}^{\top})+\frac{1-t}{n-2}\sum_{k}F_{k}^{k}\big)u_{nn}\widetilde{\varphi}^{n}+2{W}_{n}^{\alpha}u_{n\alpha}\widetilde{\varphi}^{n}+2\widetilde{\varphi}^{\alpha}u_{nn}(u_{n}u_{\alpha}+A_{n\alpha}^{t})\nonumber \\
			& -\frac{2n(1-t)}{n-2}\widetilde{\varphi}^{\alpha}u_{nn}u_{n\alpha}-\frac{2(1-t)}{n-2}\sum_{k}F_{k}^{k}u_{n\alpha}\widetilde{\varphi}^{\alpha}-C\sum_{\alpha}|W_{\alpha}^{\alpha}|\nonumber \\
			\ge & -2\big(\sigma_{1}({W}^{\top})+\frac{1-t}{n-2}\sum_{k}F_{k}^{k}\big)u_{nn}\widetilde{\varphi}^{n}-u_{nn}\sum_{\alpha}\widetilde{\varphi}_{\alpha}\widetilde{\varphi}^{\alpha}-\sum_{\alpha}(u_{n}u_{\alpha}+A_{n\alpha}^t)(u_{n}u^{\alpha}+A_{n}^{t, \alpha})u_{nn}\nonumber \\
			&-C\sum_{\alpha}|W_{\alpha}^{\alpha}|-C,\nonumber
		\end{align}
		where the last inequality holds due to the Cauchy inequality. 
		
		We turn to deal with the most difficult term: 
		
		\begin{align}
			& J_3:=P_{j}^{i}u_{ni}u_{n}^{j}\label{eq:quadratic 2} \\
			= & (F_{j}^{i}+\frac{1-t}{n-2}F_{k}^{k}g_{j}^{i})u_{ni}u_{n}^{j}\no\\
			= & \frac{1-t}{n-2}F_{k}^{k}u_{ni}u_{n}^{i}+F_{n}^{n}u_{nn}u_{nn}+F_{\beta}^{\alpha}u_{n\alpha}u_{n}^{\beta}+2F_{\beta}^{n}u_{nn}u_{n}^{\beta}\no\\
			= & \frac{1-t}{n-2}F_{k}^{k}u_{ni}u_{n}^{i}+\sigma_{1}(W^{\top})u_{nn}^{2}+(\mathrm{tr}_{g}W\delta_{\beta}^{\alpha}-W_{\beta}^{\alpha})u_{n\alpha}u_{n}^{\beta}+2(-W_{n\beta})u_{nn}u_{n}^{\beta}\no\\
			\ge & \frac{1-t}{n-2}F_{k}^{k}u_{ni}u_{n}^{i}+u_{nn}\bigg(\sigma_{1}(W^{\top})u_{nn}+\left(1+(1-t)\frac{n-1}{n-2}\right)u_{n\alpha}u_{n}^{\alpha}-2W_{\alpha n}u_{n}^{\alpha}\bigg)\nonumber \\
			& -C\sum_{\gamma}|u^{\gamma}_{\gamma}|.\no
		\end{align}
		Thus, by (\ref{eq:varphi})(\ref{eq:mixed term})(\ref{eq:quadratic 2}), for large $N$ we have 
		\begin{align}	
			J\ge & \frac{1-t}{n-2}F_{k}^{k}u_{ni}u_{n}^{i}+u_{nn}\bigg(\sigma_{1}(W^{\top})u_{nn}+(1+(1-t)\frac{n-1}{n-2})u_{n\alpha}u_{n}^{\alpha}-2W_{\alpha n}u_{n}^{\alpha}\bigg)\nonumber \\
			& -2\big(\sigma_{1}({W}^{\top})+\frac{1-t}{n-2}\sum_{k}F_{k}^{k}\big)u_{nn}\widetilde{\varphi}^{n}-\sum_{\alpha}(u_{n}u_{\alpha}+A^t_{n\alpha})(u_{n}u^{\alpha}+A_{n}^{t,\alpha})u_{nn}\nonumber \\
			& +(1-t)\big[\frac{(n-1)\Delta u}{n-2}+\frac{1}{n-2}\sum_{k}F_{k}^{k}\big]\widetilde{\varphi}_{\alpha}\widetilde{\varphi}^{\alpha}\label{eq:for sigma_1(W^T) small use}\\
			&-C\sum_{\alpha}|u_{\alpha}^{\alpha}|-C\sum_{\alpha}|W_{\alpha}^{\alpha}|-C(1-t)N\nonumber \\
			\ge & \frac{1-t}{n-2}F_{k}^{k}u_{ni}u_{n}^{i}+u_{nn}\sigma_{1}(W^{\top})u_{nn}-CW_{nn}^{2}.\label{eq:for large sigma_1(W^T) large use}
		\end{align}
		where the last inequality is due to \eqref{eq:doubel tangential initial bound}. Now Claim (1) has been proved. However, \eqref{eq:for large sigma_1(W^T) large use} is not enough for later use, we need the following refined estimates:
		\begin{align}\label{eq:main-1}
			&\sigma_{1}(W^{\top})u_{nn}+u_{n\alpha}u_{n}^{\alpha}+(1-t)u_{n\alpha}u_{n}^{\alpha}\frac{n-1}{n-2}-2W_{\alpha n}u_{n}^{\alpha}\no\\
			=&	f-\sigma_{2}(W^{\top})+\bigg(u_{n}u_{\alpha}+A_{n\alpha}^{t}\bigg)^{2}\bigg(1+\frac{(1-t)(n-1)}{n-2}\bigg)\no\\
			&+\frac{(1-t)(n-1)}{n-2}W_{n\alpha}^{2}
			-2W_{n\alpha}(u_{n}u_{\alpha}+A_{n\alpha}^{t})\frac{(1-t)(n-1)}{n-2}-\frac{(1-t)}{n-2}\Delta u\sigma_{1}(W^{\top})\no\\
			&+(-u_{n}^{2}+\frac{2-t}{2}|\nabla u|^{2}-A_{nn}^{t})\sigma_{1}(W^{\top})\no\\
			\ge &	f-\sigma_{2}(W^{\top})+\bigg(u_{n}u_{\alpha}+A_{n\alpha}^{t}\bigg)\bigg(u_{n}u^{\alpha}+A_{n}^{t,\alpha}\bigg)-\frac{(1-t)}{n-2}\Delta u\sigma_{1}(W^{\top})\no\\
			&+(-u_{n}^{2}+\frac{2-t}{2}|\nabla u|^{2}-A_{nn}^{t})\sigma_{1}(W^{\top}).
		\end{align}

		Therefore, by (\ref{eq:for sigma_1(W^T) small use})(\ref{eq:main-1}),
		we have 
		\begin{align}\label{eq:key estimate-1}
			& P_{j}^{i}(u_{ni}+u^{k}(x_{n})_{ki}-\varphi_{i})(u_{n}^{j}+u^{l}(x_{n})_{l}^{j}-\varphi^{j})\nonumber\\
			\ge & u_{nn}\bigg(f-\sigma_{2}(W^{\top})+(-u_{n}^{2}+\frac{2-t}{2}|\nabla u|^{2}-A_{nn}^{t})\sigma_{1}(W^{\top})-2\sigma_{1}(W^{\top})\varphi_{n}\bigg)\no\\
			& +\frac{1-t}{(n-2)}F_{k}^{k}u_{ni}u_{n}^{i}-\frac{(1-t)}{n-2}\Delta u\sigma_{1}(W^{\top})u_{nn}\no\\
			& -C(1-t)F_{k}^{k}-C\sum_{\alpha}|W_{\alpha}^{\alpha}|-2\frac{1-t}{n-2}\sum_{k}F_{k}^{k}u_{nn}\widetilde{\varphi}^{n}-C.
		\end{align}
		Then {\bf Claim 1} and {\bf Claim 2}  are proved.
	\end{proof}
	
	Now the proof of Theorem \ref{thm:global C2 estimate} is completed by Theorem \ref{Thm:reduction} and Theorem \ref{high dimen-double normal}. 
	\begin{remark}
		For $t < 1$ and any prescribed mean curvature $c$, the $C^2$ estimates hold independently of $\inf_M f$. This independence is a result of the fact that $\frac{1-t}{n-2}F_{k}^{k}u_{ni}u_{n}^{i}$ dominates in the expressions. The proof of double normal derivative reveals that its growth is of order $N^3$, while the other terms are of order $N^2$.
	\end{remark}
	
	\subsection{$\sigma_2$-nonlinear eigenvalue problem}\label{Subsect2.3}
	In this subsection we demonstrate that there exists a conformal  metric  having a positive $\sigma_2$-curvature and zero boundary mean curvature.

	Now we can prove Theorem \ref{theorem:sigma_2_1st_bdry_eigenvalue-1}.	
	\begin{proof}[Proof of Theorem \ref{theorem:sigma_2_1st_bdry_eigenvalue-1}: ]
		
		\emph{Step 1:} We prove that
		for every $\varepsilon>0$, there exists a solution
		$u$ satisfying
		\[
		\begin{cases}
			\sigma_{2}(A^t_{g_{u}})=e^{\varepsilon u} & \quad\mathrm{~~in~~}M,\\
			\frac{\partial u}{\partial\vec{n}}=-h_{g} & \quad\mathrm{~~on~~}\partial M.
		\end{cases}
		\]
		
		To get the existence, we consider the following equation 
		\begin{equation}\label{eq:continuity of equation}
			\begin{cases}
				\sigma_{2}(A^t_{g_{u}})=\left(s+(1-s)f(x)\right)e^{\varepsilon u}  & \qquad\mathrm{~~in~~}M,\\
				\frac{\partial u}{\partial\vec{n}}=-sh_{g} & \qquad\mathrm{~~on~~}\partial M,
			\end{cases}
		\end{equation}
		where $s\in[0,1]$ and $f(x)=\sigma_{2}(g^{-1}A_{g}^{t}).$
		
		Denote 
		\[
		\mathbb{I}=\{s\in[0,1];\mathrm{~~there~exists~a~solution~}u\mathrm{~satisfying~(\ref{eq:continuity of equation})~with~}\lambda(g^{-1}A^t_{g_{u}})\in\Gamma_{2}^{+}\}.
		\]
		
		At $s=0$, $u=0$ is the unique solution and the linearized operator
		is elliptic and invertible. It suffices
		to show the closeness by a priori estimates, thus the proof is completed
		by the continuity method.
		$\vspace{4pt}$	
		
		Choose a smooth function $l$ such that $\int_{M}l\ud\mu_{g}=\int_{\partial M}(h_{g}+1)\ud\sigma_{g}$
		and find a unique (up to a constant) smooth solution $v$ to the following
		PDE with Neumann boundary condition 
		\[
		\begin{cases}
			\Delta v=sl & \qquad\mathrm{~~in~~}M,\\
			\frac{\partial v}{\partial\vec{n}}=-sh_{g}-s & \qquad\mathrm{~~on~~}\partial M.
		\end{cases}
		\]
		
		We first observe that $\min_{\overline{M}}(v-u)$ can not happen on
		boundary $\partial M$ due to $\frac{\partial(v-u)}{\partial\vec{n}}=-s$
		on $\partial M$ for $t>0.$ Thus, we may assume $\min_{\overline{M}}(v-u)=(v-u)(x_{0})$
		for some $x_{0}\in M$ and $\nabla^{2}(v-u)(x_{0})\ge0$ and $\nabla v(x_{0})=\nabla u(x_{0}).$
		Then at $x_{0}$, for $1-\frac{n}{2}-\frac{1-t}{2}n<0,$ we have 
		\begin{align*}
			&\big((1+\frac{n(1-t)}{n-2})\Delta v+\sigma_{1}(A_{g})\big)^{2}\frac{C_{n}^{2}}{n^{2}}\\
			&=\sigma_{1}^{2}(A_{g_v}^{t})\frac{C_{n}^{2}}{n^{2}}\ge\sigma_{2}(A_{g_v})\ge\sigma_{2}(A_{g_u})\\
			&=\left(s+(1-s)f(x)\right)e^{\varepsilon u} 
		\end{align*}
		whence, 
		\[
		e^{\varepsilon u(x_{0})}\le C,
		\]
		where $C$ is a constant independent of $s$ and $\varepsilon$.
		
		Notice that 
		\[
		(v-u)(x)\ge(v-u)(x_{0})
		\]
		implies 
		\[
		u(x)\le u(x_{0})+v(x)-v(x_{0}).
		\]
		
		If $h_{g}\ge0$, then we let $\min_{\overline{M}}u=u(x_{1})$ for
		some $x_{1}\in M$ and for $t>0$, $\nabla u(x_{1})=0$ and $\nabla^{2}u(x_{1})\ge0.$
		Then at $x_{1}$, we have
		
		\[
		\left(s+(1-s)f(x)\right)e^{\varepsilon u}=\sigma_{2}(A_{g_u}^{t})\ge\sigma_{2}(A_{g}^{t}),
		\]
		whence, 
		\[
		e^{\ve u(x_{1})}\ge \min_{\overline M} \frac{\sigma_{2}(A_{g}^{t})}{s+(1-s)f(x)}.
		\]
		
		Consequently, $u$ is uniformly bounded in $s\in[0,1]$. 
		Thus by the $C^1$ estimates  in \cite{Jin-Li-Li}, Theorem \ref{Thm:reduction} and Theorem \ref{high dimen-double normal}, we have 
		\begin{align}
			& \|\nabla u\|_{C^{0}(\overline{M})}\le C_{1},
		\end{align}
		and 
		\begin{align}
			& \|\nabla^{2}u\|_{C^{0}(\overline{M})}\le C_2,
		\end{align}
		where $C_{1}$ depends on $|f|_{C^1(M)}$ and $\sup_{M}e^{\varepsilon u}$, $C_{2}$ depends on $\frac{1}{\inf_{M}(s+(1-s)f)e^{\varepsilon u}}$, $\sup_{M}e^{\varepsilon u}$ and
		$|f|_{C^2(M)}$.
		
		Therefore, we have 
		\[
		\|\nabla u\|_{C^{0}(\overline{M})}+\|\nabla^{2}u\|_{C^{0}(\overline{M})}\le C,
		\]
		where $C$ is independent of $s$ and $\ve$.
		
		By the continuity method, we know that for every $\varepsilon>0$,
		there exists a solution $u$ to 
		\[
		\begin{cases}
			\sigma_{2}(g^{-1}A^t_{g_{u}})=e^{\varepsilon u} & \qquad\mathrm{~~in~~}M,\\
			\frac{\partial u}{\partial\vec{n}}+h_g=0 & \qquad\mathrm{~~on~~}\partial M,
		\end{cases}
		\]
		satisfying 
		\[
		\|\nabla u\|_{C^{0}(\overline{M})}+\|\nabla^{2}u\|_{C^{0}(\overline{M})}\le C,
		\]
		where $C$ is independent of $\varepsilon.$
		
		\vskip 3pt
		\emph{Step 2:} 		
		Rewrite the above equation as 
		\[
		\begin{cases}
			\sigma_{2}(A^t_{g_{u}})=e^{\varepsilon(u-\bar{u})+\varepsilon\bar{u}} & \qquad\mathrm{~~in~~}M,\\
			\frac{\partial u}{\partial\vec{n}}+h_g=0 & \qquad\mathrm{~~on~~}\partial M,
		\end{cases}
		\]
		where $\bar{u}=\fint_{M}u\ud\mu_{g}$. Then, for any sequence $\{\ve_{i}\}$
		with $\ve_{i}\to0$, there holds $\|\varepsilon_{i}\nabla u\|_{C^{0}(\overline{M})}\rightarrow0$
		and thereby up to a subsequence, $\ve_{i}u=\varepsilon_{i}(u-\bar{u})+\varepsilon_{i}\bar{u}\rightarrow\lambda\in\R.$
		
		Let $v_{\varepsilon_{i}}=u-\bar{u}$ satisfy 
		\[
		\begin{cases}
			\sigma_{2}(A^t_{g_{v_{\varepsilon}}})=e^{\varepsilon_{i}v_{\varepsilon_{i}}+\varepsilon_{i}\bar{u}} & \qquad\mathrm{~~in~~}M,\\
			\frac{\partial v_{\varepsilon_{i}}}{\partial\vec{n}}+h_g=0 & \qquad\mathrm{~~on~~}\partial M,
		\end{cases}
		\]
		by virtue of the fact that $A^t_{g_u}=A^t_{g_{v_{\ve_{i}}+\bar{u}}}=A^t_{g_{v_{\ve_{i}}}}$,
		and $\|v_{\varepsilon_{i}}\|_{C^{2}(\overline{M})}\le C$. Thus for
		any $\alpha\in(0,1)$, there holds 
		\[
		\|v_{\varepsilon_{i}}\|_{C^{2,\alpha}(\overline{M})}\le C
		\]
		and for $\gamma\in(0,\alpha)$, as $\ve_{i}\to0$ we have 
		\[
		v_{\varepsilon_{i}}\rightarrow v\qquad\mathrm{~~in~~}C^{2,\gamma}(\overline{M}),
		\]
		where $v$ satisfies 
		\[
		\begin{cases}
			\sigma_{2}(A_{g_{v}}^{t})=e^{\lambda} & \qquad\mathrm{~~in~~}M,\\
			\frac{\partial v}{\partial\vec{n}}+h_g=0 & \qquad\mathrm{~~on~~}\partial M.
		\end{cases}
		\]
		
		Moreover, the uniqueness of the constant $e^{\lambda}$ follows from
		the strong maximum principle and Hopf lemma for the elliptic equation. 
	\end{proof}

	
	
	Following the same lines of the proof of \cite[Theorem 1.4]{Jin-Li-Li},  we know that 
the gradient estimates hold depending on $\sup_M e^{\varepsilon u}$. 	
	
	\section{$\sigma_2$-Yamabe Problem with prescribed mean curvature}\label{Sect3}
	
	As the  $\sigma_2$ curvature equation on three-manifolds is of special interest, in this section we focus on the case $t=1$:  For a positive
	function $f\in C^{2}(\overline{M}),$
	\begin{equation}
		\begin{cases}
			\sigma_{2}^{1/2}(g^{-1}A_{g_u})=f(x,u) &\qquad\mathrm{~~in~~}M, \quad \lambda(g^{-1}A_{g_u})\in \Gamma_2^+,\\
			\frac{\partial u}{\partial \vec{n}}+h_{g}=c(x)e^{-u} & \qquad\mathrm{~~on~~}\partial M.
		\end{cases}\label{eq:main equation 2}
	\end{equation}
	
Surprisingly, on three-manifolds, the double normal derivative can be obtained without estimates of second tangential derivatives. This peculiar phenomenon marks a departure from previous discussions.

We will begin by deriving the local double normal derivative  on $\partial M$. Subsequently, under certain natural conformally invariant conditions, we establish the existence of a solution to (\ref{eq:main equation 2}) with $f(x,u)=f(x)e^{-2u}$ in $M$, $h_{g}=0$ and $c=0$ on $\partial M$
.
	\subsection{Local double normal derivative on boundary}
	\begin{thm}\label{three-double normal}
		On $(M^{3},g)$, suppose $c(x)\ge0$ on $\partial M$. Let $u$ be
		a $C^{2}$ solution to equation (\ref{eq:main equation 2}). For
		any $\mathcal{O}\subset\mathcal{O}_{1}\subset\overline{M}$, there
		holds $$u_{nn}\le C,\quad \mathrm{on}\quad  \mathcal{O}\cap\partial M,$$ where $C$ is
		a positive constant depending on $g,f,\sup_{\mathcal{O}_{1}\cap\partial M}(|\tilde{\nabla}^{2}c|+|\tilde{\nabla}c|+c)e^{-u}$,
		$\sup_{\mathcal{O}_{1}}|\nabla u|$, $\sup_{\mathcal{O}_{1}}(f(x,u)+|f_{x}|+|f_{z}|+|f_{xx}|+|f_{xz}|+|f_{xz}|)$
		and $\sup_{\mathcal{O}_{1}}\frac{1}{f}.$
	\end{thm}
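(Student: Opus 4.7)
The plan is to adapt the maximum principle argument of Theorem \ref{high dimen-double normal} to the local three-dimensional setting. Let $x_0 \in \mathcal{O} \cap \partial M$ be a point where $u_{nn}$ attains its maximum on $\mathcal{O} \cap \partial M$, and call this value $M^\ast$. Introduce a smooth cutoff $\eta$ supported in $\mathcal{O}_1$ with $\eta \equiv 1$ on $\mathcal{O}$ and, on a small geodesic half-ball around $x_0$, consider
\[
G(x) = \eta(x)\bigl[A(x) + A(x)^{7/5}\bigr] - \tfrac{1}{2}Nd_g(x,\partial M) - N_1 d_g(x_0,x)^2,
\]
where $A(x) = \langle \nabla u, \nabla d\rangle + h_g(x') - c(x')e^{-u(x)}$ is the natural extension of the boundary quantity $u_n + h_g - ce^{-u}$, which vanishes on $\partial M$, and $N_1, N = N_2 + M^\ast$ with $N_2 \gg N_1$ are large parameters. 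The target is to show that, with appropriate $N_1, N_2$, the maximum of $G$ on the half-ball is attained at $x_0$; then $G_n(x_0) \le 0$ together with the boundary condition yields the Hopf-type inequality $u_{nn}(x_0) \le N_2 + C$.

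The contradiction argument assumes the maximum is attained at an interior point $x_1$. At $x_1$, the first-order condition $\nabla G = 0$ forces $u_{nn} \sim \tfrac{N/2}{1 + \frac{7}{5}A^{2/5}}$ up to bounded terms and expresses $u_{n\alpha}$ in terms of $N_1$, $\nabla \eta$, and $\varphi_\alpha$. Since $t = 1$ here, $P_j^i = F_j^i = \sigma_1(W)\delta_j^i - W_j^i$ and $\sum_i F_i^i = 2\sigma_1(W)$, and the $\Gamma_2^+$ cone condition yields $|W_\beta^\alpha| \le C\sigma_1(W) \le CW_n^n$. I then compute $F_j^i G_i^{\,j} \le 0$ at $x_1$, differentiate $\sigma_2(W) = f^2$ once to rewrite the $F_j^i u_{nij}$ contribution, and use the basic identities
\[
F_n^n = \sigma_1(W^\top),\qquad F_\alpha^n = -W_\alpha^n,\qquad \sigma_1(W^\top) = \frac{f^2 - \sigma_2(W^\top) + W_n^\alpha W_\alpha^n}{W_n^n}
\]
to reduce everything to a relation among $u_{nn}$, $\sigma_1(W^\top)$, $\sigma_2(W^\top)$ and $W_n^n$, mirroring the structure of Claim (1) and Claim (2) from the proof of Theorem \ref{high dimen-double normal}.

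The crucial three-dimensional feature is that the tangential block $W^\top$ is only $2 \times 2$, so $\sigma_2(W^\top)$ is a single determinant. Splitting on the sign of $\sigma_2(W^\top)$, and in the negative case further splitting on whether the largest tangential eigenvalue $W_1^1$ exceeds a threshold, the two-by-two algebra produces a scalar quadratic inequality for $W_1^1$ analogous to \eqref{polynomialofW11}, whose solution bounds $|W_\alpha^\alpha|$ purely in terms of $W_n^n$ (with a root-type dependence). This algebraic input replaces the tangential derivative bound \eqref{eq:tangential control} used in higher dimension, which is why the estimate does not need to go through the reduction of Theorem \ref{Thm:reduction} first — exactly the reduction-free phenomenon promised in the statement. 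Once this bound is in hand, the dominant positive term $F_n^n u_{nn}^2 = \sigma_1(W^\top) u_{nn}^2$ together with the $\tfrac{N(n-2)}{2}W_n^n A$ term from the extension of $x_n$ overwhelms every remaining negative term for $N$ large, yielding the desired contradiction.

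The main obstacle, as in the higher-dimensional proof, is the bookkeeping of the third-derivative term $F_j^i u_{nij}$: tangential differentiation of the boundary condition is not available at the interior point $x_1$, so the only lever is differentiating the interior equation, which produces a combination of $u_n u_{nn}\sigma_1(W^\top)$ and cross terms that must be compensated by the $Nu_n h_g$ and $\tfrac{N}{2}W_n^n A$ contributions coming from the $d_g(x,\partial M)$ penalty. The margin is tight — the leading good term is of order $N^3$ while several bad terms are of order $N^2$ with coefficients depending on the cutoff and on $\sup_{\mathcal{O}_1}|\nabla u|$ — so the right choice of $N_1$ (fixed first) and then $N$ (large relative to $N_1$ and to $1/\inf f$) is essential, and this hierarchy is precisely what the $7/5$ power in $G$ is designed to accommodate.
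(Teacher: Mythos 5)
Your proposal follows essentially the same route as the paper: the same auxiliary function with the $7/5$ power and the $-\frac{1}{2}Nd_g(x,\partial M)-N_1d_g^2(x_0,x)$ penalties, a contradiction at an interior maximum using the first-order conditions and one differentiation of the equation, and the crucial observation that in dimension three the $2\times 2$ tangential block gives an algebraic bound on $|W_\alpha^\alpha|$ (via \eqref{tangent_trace_W} and the case split on the sign of $\sigma_2(W^\top)$) that replaces the tangential second-derivative control \eqref{eq:tangential control}. One caveat: your closing remark that ``the leading good term is of order $N^3$ while the bad terms are of order $N^2$'' describes the mechanism for $t<1$, where $\frac{1-t}{n-2}F_k^ku_{ni}u_n^i$ dominates; for equation \eqref{eq:main equation 2} one has $t=1$ and that term vanishes identically, so the estimate must instead be closed by the Young-type inequality \eqref{eq:lower bound of big term-1} balancing $A^{-3/5}W_{nn}f^2$ against $\frac{1}{2}NW_n^nA$ (which is where $\sup 1/f$ enters) together with the term $\sigma_1(W^\top)(W_n^n)^2$ in the regime $\sigma_1(W^\top)\ge C^*$ --- this is exactly what the paper does, and your sketch should be adjusted accordingly.
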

	
	The proof for Theorem \ref{three-double normal} is almost same as before.  We just sketch the proof and point out the difference. 
	\begin{proof}
		Let $n=3$ and fix $x_{0}\in\partial M$. In a tubular neighborhood
		near $\partial M$ we define 
		\begin{align*}
			G(x)= & \langle\nabla u,\nabla d_{g}(x,\partial M)\rangle-ce^{-u}+h_{g}+\left(\langle\nabla u,\nabla d_{g}(x,\partial M)\rangle-ce^{-u}+h_{g}\right)^{7/5}\\
			& -\frac{1}{2}Nd_{g}(x,\partial M)-N_{1}d_{g}^{2}(x_{0},x),
		\end{align*}
		where $N,N_{1}\in\mathbb{R}_{+}$ are to be determined later and $N>N_{1}$.
		
		It suffices to consider $G$ in $M_{r}$ for sufficiently small $r$
		together with the choice of $N_{1}$ such that 
		\begin{align}
			N_{1}r^{2}> & \sup_{M_{r}}|\langle\nabla u,\nabla d_{g}(x,\partial M)\rangle-ce^{-u}+h_{g}+(\langle\nabla u,\nabla d_{g}(x,\partial M)\rangle-ce^{-u}+h_{g})^{7/5}|\no\\
			:= & C_{0}(r).\label{eq:condtion 0-1-1}
		\end{align}
		
		Due to the same reason as before, our purpose is to show that $G$
		achieves its maximum value at $x_{0}$ for some appropriate $N$ and
		$N_{1}$.
		
		By contradiction, we assume 
		\[
		\max_{\overline{M_{r}}}G=G(x_{1})>0\quad\mathrm{~~for~~}\quad x_{1}\in\mathring{M}_{r}.
		\]

		For $n=3,\sigma_{1}(W^{\top})=W_{1}^{1}+W_{2}^{2}>0$. Without loss
		of generality, we assume that $W_{1}^{1}\ge W_{2}^{2}$ and $W_{1}^{1}>0$
		at $x_{1}$. Actually, there hold $W_{1}^{1}\ge|W_{2}^{2}|$ and $F_{1}^{1}\le F_{2}^{2}$
		at $x_{1}$. Roughly, we know that $\frac{1}{4}N\le u_{nn}\le N$ and $\frac{1}{2}W_{nn}\le u_{nn}\le 2W_{nn}$.
		
		By the similar argument of the proof of Theorem \ref{high dimen-double normal}, similar by \eqref{eq:final 1},  we have 
		\begin{align}\label{eq:final 1-1}
			0\ge & \frac{14}{25}A^{-\frac{3}{5}}\big[u_{nn}\left(f^{2}-\sigma_{2}(W^{\top})-C_{4}\sigma_{1}(W^{\top})\right)-CW_{1}^{1}\big]\no\\
			&+A\frac{N(n-2)}{2}W_{n}^{n}
			-CNW^{1}_{1}-CF_{k}^{k}.\no\\
			&=:\mathbb{P}+A\frac{N(n-2)}{2}W_{n}^{n}, 
		\end{align}	
where we have used that 
\begin{align*}
&2(1+\frac{7}{5}A^{\frac{2}{5}})\sigma_{1}(W|W_{\alpha}^{\alpha})u_{\alpha}^{\beta}L_{\beta}^{\alpha}\\
\le& C(W^1_1+W^3_3)|W^2_2|+C(W^2_2+W^3_3)|W^1_1|\\
\le & CW^3_3W^1_1+CW^1_1|W^2_2|\\
\le & CNW^1_1+C\big|f^2-\sigma_{1}(W^{\top})W_{n}^{n}+\sum_{\alpha}W_{n}^{\alpha}W_{\alpha}^{n}\big|\\
\le & CNW^1_1+C.
\end{align*}

		Consider $\frac{13}{25}A^{-\frac{3}{5}}W_{nn}f^{2}+\frac{1}{2}NW_{n}^{n}A$
		as a function of $A$, we know that
		
		\begin{equation}
			\frac{13}{25}A^{-\frac{3}{5}}W_{nn}f^{2}+\frac{1}{2}NW_{n}^{n}A\ge(\frac{1}{2})^{\frac{13}{8}}W_{n}^{n}N^{\frac{3}{8}}f^{\frac{5}{4}}\ge(\frac{1}{2})^{\frac{13}{8}}W_{n}^{n}N^{\frac{3}{8}}\inf f^{\frac{5}{4}}.\label{eq:lower bound of big term-1}
		\end{equation}
		This fact will be used later. 
		\vskip 4pt \emph{Case 1: } $W^2_2(x_1)\geq0$.
		\vskip 4pt
		
		By definition of $\sigma_2(W)$ and the first derivatives of $G$, we have 
		
		\[
		0\leq W_{1}^{1}W_{2}^{2}=\sigma_{2}(W^{\top})\leq CN_{1}^{2}
		\]
		and then 
		\begin{equation}
			0\leq W_{2}^{2}\le W_{1}^{1}\le\sigma_{1}(W^{\top})\leq C\frac{N_{1}^{2}}{W_{n}^{n}}\quad\mathrm{~~and~~}\quad\sigma_{2}(W^{\top})\leq C\frac{N_{1}^{4}}{(W_{n}^{n})^{2}}.\label{CaseAtangentialbound-1}
		\end{equation}
		With a  choice of large $N$, by \eqref{CaseAtangentialbound-1} we have 
		\begin{equation}
			u_{nn}\big(f^{2}-\sigma_{2}({W}^{\top})-C_{4}\sigma_{1}({W}^{\top})-\frac{C_{4}W^1_1}{u_{nn}}\big)\ge\frac{13}{14}f^{2}W_{nn}.
		\end{equation}
		and then by (\ref{eq:final 1-1})(\ref{CaseAtangentialbound-1})(\ref{eq:lower bound of big term-1}), 
		we obtain 
		\begin{align*}
			0\ge & F_{j}^{i}G_{i}^{j}\\
			\ge & (\frac{1}{2})^{\frac{13}{8}}W_{n}^{n}N^{\frac{3}{8}}\inf f^{\frac{5}{4}}-C-CN>0,
		\end{align*}
		which yields a contradiction by a further large $N$ also depending
		on $\sup_{M_{r}}\frac{1}{f(x,u)}$.
		
		\vskip 4pt \emph{Case 2:} $W^2_2(x_{1})<0$. \vskip
		4pt
		
		Notice that $\frac{\partial\sigma_{2}(W)}{\partial W_{1}^{1}}>0$
		implies $0>W_{2}^{2}>-W_{n}^{n}$ at $x_{1}$. Moreover, 
		\begin{equation}
			|W_{2}^{2}|\le W_{1}^{1}\le\sigma_{1}(W)=\sigma_{1}(W^{\top})+W_{nn}.\label{upperboundof W}
		\end{equation}
		
		By the definition of $\sigma_2$, we write 
		\begin{equation}
			\sigma_{1}(W^{\top})=\frac{(W_{2}^{2})^{2}+(\sum_{\alpha}W_{n}^{\alpha}W_{\alpha}^{n}+f^{2})}{W_{2}^{2}+W_{n}^{n}}.\label{tangent_trace_W}
		\end{equation}
		
		\emph{Case 2.1:} $\sigma_{1}(W^{\top})\ge C^{*}=10C_{2}A_{1}^{\frac{3}{5}}+1$,
		where $C_{2}$ is a positive constant depending on the second fundament
		of the boundary and $|u|_{C^{1}}$.
		
		By \eqref{upperboundof W} we choose a sufficiently large $N$ such
		that 
		\begin{equation}
			\sigma_{1}(W^{\top})W_{n}^{n}-\sum_{\alpha}W_{n}^{\alpha}W_{\alpha}^{n}-C_{4}\sigma_{1}({W}^{\top})-C_{4}\frac{W_{1}^{1}}{u_{nn}}\ge\frac{3}{4}\sigma_{1}(W^{\top})\frac{(W_{n}^{n})^{2}}{u_{nn}}.\label{lower bound of main term 1}
		\end{equation}
		As 
		\[
		f^{2}-\sigma_{2}(W^{\top})=\sigma_{1}(W^{\top})W_{n}^{n}-\sum_{\alpha}W_{n}^{\alpha}W_{\alpha}^{n},
		\]
		and \eqref{lower bound of main term 1}, we have 
		\begin{align}
			\mathbb{P}\ge & \frac{13}{25}A^{-\frac{3}{5}}u_{nn}\left(\sigma_{1}(W^{\top})W_{n}^{n}-\sum_{\alpha}W_{n}^{\alpha}W_{\alpha}^{n}-C_{4}\sigma_{1}({W}^{\top})-C_{4}\frac{W_{1}^{1}}{u_{nn}}\right)\nonumber \\
			& -CW_{n}^{n}W_{1}^{1}-C(W_{1}^{1}+W_{n}^{n})\nonumber \\
			\ge & \frac{42}{100}A^{-\frac{3}{5}}W_{n}^{n}W_{n}^{n}\sigma_{1}({W}^{\top})-C_{2}W_{n}^{n}W_{1}^{1}-C_{2}(\sigma_{1}({W}^{\top})+W_{n}^{n})\nonumber \\
			\ge & \frac{21}{100}A^{-\frac{3}{5}}W_{n}^{n}W_{n}^{n}\sigma_{1}({W}^{\top}),\label{the lower bound of P-CaseB.1-3}
		\end{align}
		where we taking $N$ sufficiently large such that 
		\[
		\frac{21}{100}A^{-\frac{3}{5}}W_{n}^{n}W_{n}^{n}\sigma_{1}({W}^{\top})\ge C_{2}W_{n}^{n}W_{1}^{1}+C_{2}(\sigma_{1}({W}^{\top})+W_{n}^{n}).
		\]
		Here we have used $\sigma_{1}(W^{\top})>C^{*}$ in the above inequality.
		
		%
		Then it follows from (\ref{eq:final 1-1})(\ref{the lower bound of P-CaseB.1-3}) that
		\[
		0\ge F_{j}^{i}G_{i}^{j}>\frac{1}{2}NW_{n}^{n}A>0.
		\]
		This yields a contradiction.

		\emph{Case 2.2:} $\sigma_{1}(W^{\top})<C^{*}.$
		
		Notice that 
		\[
		0<W_{1}^{1}+W_{2}^{2}\le C^{*},\quad\sum_{i}F_{i}^{i}\le2W_{n}^{n}+2C^{*}.
		\]
		Thus, for a large $N$ we have 
		\begin{equation}
			0<\sum F_{i}^{i}\le4W_{n}^{n}.\label{upper bound of F^ii}
		\end{equation}
		It follows from \eqref{tangent_trace_W} that 
		\[
		\frac{C^{*}}{2}-\sqrt{\frac{(C^{\ast})^{2}}{4}+C^{*}W_{n}^{n}}\le W_{2}^{2}<0
		\]
		and then in Case 2.2, we have
		\begin{equation}
			0<W_{1}^{1}\le\sqrt{\frac{(C^{\ast})^{2}}{4}+C^{*}W_{n}^{n}}+\frac{C^{*}}{2}.\label{eq:subcase B.2 the bound of W^11-1}
		\end{equation}

		By the choice of large $N$ and $|W_{2}^{2}|\le W_{1}^{1}$, we have
		\begin{align*}
			\mathbb{P}\ge & \frac{13}{25}A^{-\frac{3}{5}}W_{nn}\left(f^{2}-\sigma_{2}({W}^{\top})-CW_{1}^{1}+CW_{2}^{2}\right)-CW_{n}^{n}W_{1}^{1}-C(\sum_{i}{F}_{i}^{i})\\
			\ge & \frac{13}{25}A^{-\frac{3}{5}}W_{nn}\left(f^{2}-\sigma_{2}({W}^{\top})-C_{5}W_{1}^{1}+C_{5}W_{2}^{2}\right)-C(\sum_{i}{F}_{i}^{i})
		\end{align*}
		where $C_{5}$ depends on $|g|_{C^{2}}$ and $|u|_{C^{1}}.$
		
		Now we know from \eqref{eq:final 1-1} that
		\begin{align}
			0\ge & F_{j}^{i}G_{i}^{j}\no\label{eq:final subcase B.2-1}\\
			\ge & \frac{13}{25}A^{-\frac{3}{5}}W_{nn}\left(f^{2}-\sigma_{2}({W}^{\top})-C_{5}W_{1}^{1}+C_{5}W_{2}^{2}\right)\nonumber \\
			& +\frac{1}{2}NW_{n}^{n}A-C(\sum_{i}{F}_{i}^{i}).
		\end{align}
		
		Notice that 
		\begin{align*}
			& -\sigma_{2}({W}^{\top})-C_{5}W_{1}^{1}+C_{5}W_{2}^{2}\\
			= & -W_{1}^{1}W_{2}^{2}-C_{5}W_{1}^{1}+C_{5}W_{2}^{2}\\
			= & W_{1}^{1}(-\frac{1}{2}W_{2}^{2}-C_{5})-W_{2}^{2}(\frac{1}{2}W_{1}^{1}-C_{5}).
		\end{align*}
		
		\emph{Case 2.2.1:} $-\frac{1}{2}W_{2}^{2}-C_{5}\geq0$.
		
		Notice that $\frac{1}{2}W_{1}^{1}-C_{5}\geq-\frac{1}{2}W_{2}^{2}-C_{5}>0$.
		
		Hence, by (\ref{eq:final subcase B.2-1})(\ref{eq:lower bound of big term-1})(\ref{upper bound of F^ii}) we obtain a contradiction via 
		\begin{align*}
			0\ge & F_{j}^{i}G_{i}^{j}\\
			\ge & \frac{13}{25}A^{-\frac{3}{5}}W_{nn}f^{2}+\frac{1}{2}NW_{n}^{n}A-C(\sum_{i}{F}_{i}^{i})\\
			\ge & (\frac{1}{2})^{\frac{13}{8}}W_{n}^{n}N^{\frac{3}{8}}-C(\sum_{i}{F}_{i}^{i})>0,
		\end{align*}
		where we have used the fact that $-\frac{1}{2}W_{2}^{2}-C_{5}\geq0$
		in the second inequality.
		
		\emph{Case 2.2.2:} $-\frac{1}{2}W_{2}^{2}-C_{5}<0$.
		
		Then 
		\begin{equation}
			|W_{2}^{2}|\le W_{1}^{1}\le C^{*}-W_{2}^{2}\le C_{6}.\label{boundness of tangetial derivative in final case}
		\end{equation}
		Notice that $\sigma_{1}(W^{\top})=\frac{f^{2}-\sigma_{2}(W^{\top})+\sum_{\alpha}W_{n}^{\alpha}W_{\alpha}^{n}}{W_{n}^{n}}\le\frac{C}{W_{n}^{n}}.$
		Choosing $N$ large enough such that 
		\[
		f^{2}-\sigma_{2}({W}^{\top})-C_{4}\sigma_{1}({W}^{\top})-\frac{C_{4}W_{1}^{1}}{W_{n}^{n}}\geq\frac{13}{14}\inf_M f^{2}
		\]
		Together with (\ref{eq:final 1-1})\eqref{eq:lower bound of big term-1}\eqref{boundness of tangetial derivative in final case},
		we have 
		\begin{align*}
			0 & \ge F_{j}^{i}G_{i}^{j}\\
			& \ge(\frac{1}{2})^{\frac{13}{8}}W_{n}^{n}N^{\frac{3}{8}}-CN>0.
		\end{align*}
		This is a contradiction.
		
		Therefore, we conclude that the boundary point $x_{0}$ is the maximum
		point of $G$ over $\overline{M_{r}}$. This together with boundary
		condition that $(u_{n}-\varphi)(x_{0})=0$ yields 
		\begin{align*}
			0 & \ge G_{n}(x_{0})=(u_{nn}-\varphi_{n})(x_{0})[1+\frac{7}{5}(u_{n}-\varphi)^{2/5}(x_{0})]-\frac{1}{2}N\\
			& =(u_{nn}-\varphi_{n})(x_{0})-\frac{1}{2}N.
		\end{align*}
		This gives $u_{nn}(x_{0})\le\frac{1}{2}N+\|\varphi\|_{C^{1}(\overline{M_{r}})}$. 
	\end{proof}
	\subsection{Existence of $\sigma_2$-Yamabe equation}
	Without loss of generality,  we assume that the metric $g$ satisfies $h_g=0$ and $\lambda(g^{-1}A_g)\in \Gamma_2^+$ by Theorem \ref{theorem:sigma_2_1st_bdry_eigenvalue-1}. 	
	To prove Therem \ref{theorem:sigma_2_1st_bdry_eigenvalue-1}, we consider the following path of equation:
	\begin{equation}
		\begin{cases}
			\sigma_{2}^{1/2}(g^{-1}(A_{g_{u}}+S_{g}(t)))\\[1pt]
			=(1-t)(\int_{M}e^{-4u})^{\frac{1}{2}}+\psi(t)\sigma_2^{\frac{1}{2}}(g_{\mathbb{S}^{3}}^{-1}A_{\mathbb{S}^{3}})e^{-2u} & \text{in}\quad M^3,\\[3pt]
			\frac{\partial u}{\partial \vec{n}}=0 &\text{on}\quad \partial M,
		\end{cases}\label{eq:path}
	\end{equation}
	where  $S_{g}=(1-\psi(t))(\frac{1}{\sqrt{3}}V_{g}^{1/2}(M)g-A_{g})$
	and $\psi$ is a non-negative function such that $\psi(0)=0$ and $\psi(t)=1$
	for $\frac{1}{2}\le t\le1.$
	\vskip 4pt

	\begin{lemma} \label{lem:finite supervolume with finite interval}Let
		$u$ be a solution to (\ref{eq:path}) for $t \in [0,1]$. Then 
		\[
		(1-t)(\int_{M}e^{-4u}\ud\mu_{g})^{\frac{1}{2}}\le C.
		\]
	\end{lemma}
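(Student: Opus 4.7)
My plan is to integrate the path equation over $M$, trade $\sigma_{2}^{1/2}$ for the trace $\sigma_{1}$ via Newton--MacLaurin, and then close the estimate with the divergence theorem using the Neumann condition. No pointwise maximum-principle analysis of the fully nonlinear operator is needed.

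First, since $\psi(t)\sigma_{2}^{1/2}(g_{\mathbb{S}^{3}}^{-1}A_{\mathbb{S}^{3}})e^{-2u}\ge 0$, the path equation (\ref{eq:path}) yields the pointwise lower bound
\[
\sigma_{2}^{1/2}\bigl(g^{-1}(A_{g_u}+S_g(t))\bigr)\ge (1-t)\Bigl(\int_M e^{-4u}\,\ud\mu_g\Bigr)^{1/2}.
\]
Integrating over $M$ gives
\[
\int_M\sigma_{2}^{1/2}\bigl(g^{-1}(A_{g_u}+S_g(t))\bigr)\,\ud\mu_g\ge (1-t)\Bigl(\int_M e^{-4u}\,\ud\mu_g\Bigr)^{1/2} V_g(M).
\]

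For the matching upper bound I use that $\lambda(g^{-1}(A_{g_u}+S_g(t)))\in\Gamma_{2}^{+}$ together with the Newton--MacLaurin inequality in dimension three, $\sigma_{2}^{1/2}\le\sigma_{1}/\sqrt{3}$, to reduce to a trace integral:
\[
\int_M\sigma_{2}^{1/2}\bigl(g^{-1}(A_{g_u}+S_g(t))\bigr)\,\ud\mu_g\le\tfrac{1}{\sqrt{3}}\int_M\mathrm{tr}_g\bigl(A_{g_u}+S_g(t)\bigr)\,\ud\mu_g.
\]
Using the conformal change formula at $t=1$, $n=3$ together with $\mathrm{tr}_g g=3$ in the definition of $S_g(t)$, a direct computation yields
\[
\mathrm{tr}_g(A_{g_u}+S_g(t))=\psi(t)\sigma_{1}(A_g)+(1-\psi(t))\sqrt{3}\,V_g^{1/2}(M)+\Delta u-\tfrac{1}{2}|\nabla u|^{2}.
\]
The divergence theorem and the Neumann condition $\partial u/\partial\vec n=0$ make $\int_M\Delta u\,\ud\mu_g=0$, while the term $-\tfrac{1}{2}|\nabla u|^{2}$ is non-positive. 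Hence the trace integral is bounded above by a constant $C(g,\psi)$ independent of $t$ and $u$.

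Combining the two inequalities, $(1-t)(\int_M e^{-4u}\,\ud\mu_g)^{1/2}\le C(g,\psi)/(\sqrt{3}\,V_g(M))$, which is the claim. The only technical ingredient beyond the divergence theorem is the Newton--MacLaurin inequality on $\Gamma_{2}^{+}$, which is standard; consequently I do not expect any serious obstacle, and no delicate Hopf-type argument at $\partial M$ is required.
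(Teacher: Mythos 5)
Your argument is correct, but it proceeds by a genuinely different route from the paper. The paper evaluates the equation at a maximum point $x_{0}$ of $u$: there $\nabla u(x_{0})=0$ and $\nabla^{2}u(x_{0})\le 0$ (using the Neumann condition to handle the case $x_{0}\in\partial M$), so by monotonicity of $\sigma_{2}^{1/2}$ on $\Gamma_{2}^{+}$ one gets $(1-t)(\int_{M}e^{-4u})^{1/2}\le\sigma_{2}^{1/2}(A_{g}(x_{0})+S_{g}(x_{0}))\le C$ directly, after discarding the nonnegative term $\psi(t)\sigma_{2}^{1/2}(g_{\mathbb{S}^{3}}^{-1}A_{\mathbb{S}^{3}})e^{-2u}$. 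You instead integrate the equation, pass from $\sigma_{2}^{1/2}$ to $\sigma_{1}/\sqrt{3}$ by Newton--MacLaurin on $\Gamma_{2}^{+}$, and kill $\int_{M}\Delta u$ with the divergence theorem and the Neumann condition; your trace computation of $A_{g_{u}}+S_{g}(t)$ in dimension three is correct, and the sign of $-\tfrac12|\nabla u|^{2}$ closes the estimate. Each approach has a mild advantage: the paper's is shorter and purely pointwise, but it relies on the full Hessian being nonpositive at a boundary maximum, which requires a small extra justification beyond the one-sided normal derivative; your integral version sidesteps any boundary maximum-principle subtlety and uses only the divergence theorem, at the cost of invoking the cone condition $\lambda(g^{-1}(A_{g_{u}}+S_{g}(t)))\in\Gamma_{2}^{+}$ explicitly (which is the standing ellipticity assumption along the continuity path, so this is harmless) and of producing a constant involving $V_{g}(M)^{-1}$. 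Both yield a bound depending only on $g$ and $\psi$, uniformly in $t$.
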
 
	\begin{proof}
		Let $u(x_{0})=\max_{\overline{M}}u(x)$. If $x_0\in M$, then  we know $\nabla u(x_{0})=0$ and $\nabla^{2}u(x_{0})\le0$.
		Thus, we have 
		\[
		(1-t)(\int_{M}e^{-4u}\ud\mu_{g})^{\frac{1}{2}}\le\sigma_{2}^{1/2}(A_{g}(x_{0})+S_{g}(x_{0})).
		\]
		The desired estimate follows. 
		If $x_0\in \partial M$, then 
		$0\ge \frac{\partial u}{\partial\vec{n}}(x_0)=0$. So  $\nabla u(x_{0})=0$ and $\nabla^{2}u(x_{0})\le0$ on $\pa M$. The remaining argument is similar as above. 
	\end{proof}
	
	\begin{thm}
		\label{C0estimate} Assume 
		\begin{equation}
			\Lambda_{2}(M^{3},\partial M, [g])<\frac{1}{2}\mathrm{vol}(\mathbb{S}^{3},g_{\mathbb{S}^{3}}).\label{upper bound of conformal invariant}
		\end{equation}
		Let $u$ be a  $C^4$ solution to (\ref{eq:path}). Then there exists a uniform
		constant $C$ such that 
		\[
		|u|_{C^{0}(M)}\le C.
		\]
	\end{thm}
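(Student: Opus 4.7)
The plan is to prove Theorem \ref{C0estimate} by contradiction through a concentration--compactness / blow-up analysis, combining the Liouville classification for the Euclidean $\sigma_2$-Yamabe equation with the sharp hypothesis $\Lambda_2 < \tfrac{1}{2}\mathrm{vol}(\mathbb{S}^3)$.

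\textit{Step 1 (uniform integral bound).} I will first show that $\int_M e^{-3u_t}\,d\mu_g$ is bounded uniformly in $t\in[0,1]$. For $t\in[\tfrac{1}{2},1]$ the cutoff gives $\psi(t)=1$ and $S_g(t)\equiv 0$, so the equation reads $\sigma_2^{1/2}(g_u^{-1}A_{g_u}) \ge \sigma_2^{1/2}(g_{\mathbb{S}^3}^{-1}A_{\mathbb{S}^3})$; together with $h_{g_u}=h_g=0$, this places $g_u$ in the admissible class defining $\Lambda_2$, whence
\[
\int_M e^{-3u_t}\,d\mu_g = \mathrm{vol}(g_{u_t}) \le \Lambda_2 < \tfrac{1}{2}\mathrm{vol}(\mathbb{S}^3).
\]
For $t\in[0,\tfrac{1}{2}]$, I would combine Lemma \ref{lem:finite supervolume with finite interval} with H\"older's inequality to get $\int_M e^{-3u_t}\,d\mu_g \le C$ uniformly.

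\textit{Step 2 (blow-up at the infimum).} Suppose for contradiction that $\|u_{t_i}\|_{C^0}\to\infty$ for some $t_i\to t_\infty$. It will suffice to rule out $\inf u_{t_i}\to -\infty$: the alternative $\sup u_{t_i}\to\infty$ with $\inf u_{t_i}$ bounded would be handled by a Harnack-type inequality for $\sigma_2$-admissible conformal factors, combined with the integral bound of Step 1 and the gradient estimate of \cite{Jin-Li-Li}. So assume $u_i(x_i)=\min u_i\to-\infty$ with $x_i\to x_\infty\in\overline M$, and set $\lambda_i:=e^{u_i(x_i)}\to 0$. In normal (interior case) or Fermi (boundary case) coordinates at $x_i$, define
\[
v_i(y) := u_i\bigl(\exp_{x_i}(\lambda_i y)\bigr) - u_i(x_i),
\]
so that $v_i\ge 0$, $v_i(0)=0$, $\nabla v_i(0)=0$, and the Neumann condition rescales to $\partial_{y_n}v_i=0$ on $\{y_n=0\}$. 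Using $e^{-2u_i}\circ\exp_{x_i}(\lambda_i\,\cdot)=\lambda_i^2 e^{-2v_i}$, the leading-order scaling $A_{g_{u_i}}\sim\lambda_i^{-2}A_{v_i,\mathrm{Euc}}$, and the convergence $\lambda_i^2(1-t_i)I_{t_i}\to 0$ (from the Lemma), I would apply Theorems \ref{thm:global C2 estimate} and \ref{thm:three-double normal} to the rescaled problem and bootstrap to obtain uniform $C^{2,\alpha}_{\mathrm{loc}}$ bounds on $v_i$. A subsequence converges to a limit $v_\infty$ on $\mathbb{R}^3$ or $\overline{\mathbb{R}^3_+}$ solving
\[
\sigma_2^{1/2}\bigl(g_{\mathrm{Euc}}^{-1}A_{(e^{-2v_\infty}g_{\mathrm{Euc}})}\bigr) = \psi(t_\infty)\,\sigma_2^{1/2}\bigl(g_{\mathbb{S}^3}^{-1}A_{\mathbb{S}^3}\bigr)\,e^{-2v_\infty},
\]
with Neumann boundary data on $\partial\mathbb{R}^3_+$ in the boundary case.

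\textit{Step 3 (Liouville and volume contradiction).} Provided $\psi(t_\infty)>0$, the Liouville theorem of Li-Li (extended by even reflection across $\{y_n=0\}$ in the Neumann half-space case) forces $e^{-2v_\infty}g_{\mathrm{Euc}}$ to be, up to translation and absorbing a constant into $v_\infty$, the stereographic pullback of $g_{\mathbb{S}^3}$. A change of variables then yields
\[
\lim_{R\to\infty}\lim_{i\to\infty}\int_{B_{R\lambda_i}(x_i)} e^{-3u_i}\,d\mu_g = \varepsilon\,\psi(t_\infty)^{-3/2}\mathrm{vol}(\mathbb{S}^3),
\]
with $\varepsilon=1$ (interior) or $\varepsilon=\tfrac{1}{2}$ (boundary). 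Since $\psi(t_\infty)\le 1$, this bubble volume is at least $\tfrac{1}{2}\mathrm{vol}(\mathbb{S}^3)$, contradicting Step 1. The hard part will be the boundary case: preserving the rescaled Neumann condition and the boundary $C^2$ control of Theorem \ref{thm:three-double normal} uniformly through the blow-up, and then invoking the half-space Liouville theorem---this is precisely where the factor $\tfrac{1}{2}$ in the hypothesis $\Lambda_2<\tfrac{1}{2}\mathrm{vol}(\mathbb{S}^3)$ originates. A subsidiary technicality is the degenerate limit $\psi(t_\infty)=0$, which occurs only at $t_\infty=0$ and should be handled by a direct perturbative argument around the reference solution $u\equiv 0$ of (\ref{eq:path}) at $t=0$.
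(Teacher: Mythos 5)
Your overall strategy --- blow up at a diverging infimum, use the a priori estimates of Theorems \ref{Thm:reduction} and \ref{three-double normal} to extract a Liouville solution on $\R^3$ or $\overline{\R^3_+}$, and contradict the volume hypothesis via a bubble of mass at least $\tfrac12\mathrm{vol}(\mathbb{S}^3)$ --- is the same as the paper's (Step A there), including the role of the half-space Liouville theorem of Li--Li in producing the factor $\tfrac12$. However, there is a genuine gap in how you close the contradiction when $\psi(t_\infty)<1$, i.e.\ for $t_\infty\in(0,\tfrac12)$. In that regime your Step 1 only provides $\int_M e^{-3u_t}\,\ud\mu_g\le C$ via Lemma \ref{lem:finite supervolume with finite interval} and H\"older, and this constant (involving $\sup_M\sigma_2^{1/2}(A_g+S_g)$ and $\mathrm{vol}(M,g)$) has no reason to be smaller than the bubble volume $\tfrac12\psi(t_\infty)^{-3/2}\mathrm{vol}(\mathbb{S}^3)$; moreover the curvature condition (\ref{curvature condition}) fails for $g_{u_t}$ when $S_g(t)\neq0$ or $\psi(t)<1$, so you cannot invoke $\Lambda_2$ either. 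As written, Step 3 therefore yields no contradiction for $t_\infty\in(0,\tfrac12)$, and your treatment of the degenerate case $\psi(t_\infty)=0$ (a ``perturbative argument around $u\equiv0$'') is not a proof: uniqueness at $t=0$ does not by itself exclude blow-up along a sequence $t_i\to0$.

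The missing observation --- which is how the paper, following S.~Chen \cite{Chen}, confines the genuine blow-up analysis to $t\to1$ --- is that the $L^4$ bound of Lemma \ref{lem:finite supervolume with finite interval} must be used directly rather than degraded to $L^3$ by H\"older. If $\lambda_i=e^{\inf u_i}\to0$ and the rescaled functions converge in $C^2_{\mathrm{loc}}$ to a nontrivial limit, then
\[
\int_M e^{-4u_i}\,\ud\mu_g\ \ge\ \int_{B_{R\lambda_i}(x_i)}e^{-4u_i}\,\ud\mu_g\ =\ \lambda_i^{-1}\int_{B_R}e^{-4v_i}\,\ud\mu_{g_i}\ \longrightarrow\ \infty,
\]
so $(1-t_i)\bigl(\int_M e^{-4u_i}\bigr)^{1/2}\le C$ forces $t_i\to1$. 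Hence any blow-up can only occur with $\psi(t_i)=1$ and $S_g(t_i)=0$, where your Step 1 gives $\mathrm{vol}(g_{u_i})\le\Lambda_2<\tfrac12\mathrm{vol}(\mathbb{S}^3)$ and the contradiction with the bubble goes through. With this correction your argument coincides with the paper's.
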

	
	The proof of Theorem \ref{C0estimate} is nearly identical to that of \cite[Proposition 2]{Chen}. We skip most of the argument and emphasize that the only significant difference lies in the contradiction derived from (\ref{upper bound of conformal invariant}) instead of the Gauss-Bonnet-Chern formula for manifolds with umbilic boundary.
	\begin{proof}
		\emph{Step A}: we prove that \emph{(A.1)}: $\inf_{M}u\ge-C$ for $t\in[0,1-\varepsilon]$; 
		\emph{(A.2)}: $\inf_{M}u>-C$ for $t\rightarrow1.$ The proof of \emph{(A.1)}
		is exactly same as the argument of S. Chen \cite{Chen} and we omit it. For \emph{(A.2)},
		we modify the argument of \cite{Chen} and suppose that there exists a sequence
		of solutions $u_{i}$ to (\ref{eq:path}) and $t_{i}\rightarrow1$
		such that $\inf_{M}u_{i}=u_{i}(p_{i})\rightarrow-\infty$, $p_{i}\rightarrow p_{0}$, and $\psi(t_{i})=1$ as $t_{i}\rightarrow1$. Let
		$\varepsilon_{i}=e^{\inf u_{i}}\rightarrow0$ and $d_{i}$ be the
		distance from $p_{i}$ to $\partial M$. 
		
		Denote 
		\[
		\tilde{u}_{i}(x)=u_{i}(\exp_{p_{i}}\varepsilon_{i}x)-\log\varepsilon_{i}
		\]
		and 
		\[
		g_{i}=\varepsilon_{i}^{-2}\psi_{i}^{*}(g)=g_{i,kl}(\psi_{i}(x))\ud x^{k}\otimes \ud x^{l}\rightarrow g_{\mathrm{E}},\quad\text{as}\quad i\rightarrow\infty,
		\]
		where $\psi_{i}(x)=\exp_{p_{i}}(\varepsilon_{i}x)$ and $g_{\mathrm E}$ is the Euclidean flat metric. We divide into
		two cases: $\frac{d_{i}}{\varepsilon_{i}}\rightarrow\infty$ and $\frac{d_{i}}{\varepsilon_{i}}\le C_{0}.$
		By Theorem \ref{Thm:reduction} and Theorem \ref{three-double normal}, we know that $|\tilde{u}_{i}|_{C^2(K)}\le C$  for any compact set $K\subset \overline{\mathbb{R}_{+}^{3}}$ if $\frac{d_{i}}{\varepsilon_{i}}\le C_{0}$ or  $K\subset {\mathbb{R}^{3}}$ if  $\frac{d_{i}}{\varepsilon_{i}}\rightarrow\infty$. Then
		the rescaled function $\tilde{u}_{i}$ converges uniformly to a solution
		$u\in C^{\infty}(\overline{\mathbb{R}_{+}^{3}})$ to 
		\begin{equation}
			\begin{cases}
				\sigma_{2}^{1/2}(\nabla^{2}u+\nabla u\otimes\nabla u-\frac{|\nabla u|^{2}}{2}g_{ \mathrm E})=\sigma_2^{\frac{1}{2}}(g_{\mathbb{S}^{3}}^{-1}A_{\mathbb{S}^{3}})e^{-2u} &\text{in}\quad \overline{\mathbb{R}_{+}^{3}},\\
				\frac{\partial u}{\partial \vec{n}}=0 &\text{on}\quad \partial\overline{\mathbb{R}_{+}^{3}},
			\end{cases}\label{eq:A.2.1}
		\end{equation}
		or 
		\begin{equation}
			\sigma_{2}^{1/2}(\nabla^{2}u+\nabla u\otimes\nabla u-\frac{|\nabla u|^{2}}{2}g_{\mathrm E})=\sigma_2^{\frac{1}{2}}(g_{\mathbb{S}^{3}}^{-1}A_{\mathbb{S}^{3}})e^{-2u}\quad\text{in}\quad\mathbb{R}^{3}.\label{eq:A.2.2}
		\end{equation}
By the Liouville theorem in \cite{Li-Li2, Li-Li3}, the solution $u$ has been classified as sphere or half sphere.
		For (\ref{eq:A.2.1}), fix any $r_{0}>0$, 
		\begin{align*}
			\mathrm{vol}(e^{-2u_{i}}g) & =\int_{M}e^{-3u_{i}}\ud V_{g}\ge\int_{B_{r_{0}}^{+}(x_{i})}e^{-3u_{i}}\ud V_{g}\\
			& =\int_{\psi_{i}^{-1}(B_{r_{0}}^{+}(x_{i}))}e^{-3\widetilde{u}_{i}}\ud V_{g_{i}}.
		\end{align*}
		Thus,  
		we have
		\begin{equation}\label{halfvolume}
		\underline{\lim}_{i\rightarrow\infty}\mathrm{vol}(e^{-2u_{i}}g)\ge\frac{1}{2}\mathrm{vol}(\mathbb{S}^{3}).
		\end{equation}
		
		By (\ref{eq:A.2.2}), 
		\begin{align*}
			\mathrm{vol}(e^{-2u_{i}}g) & =\int_M e^{-3u_{i}}\ud V_{g}\ge\int_{B_{r_{0}}(x_{i})}e^{-3u_{i}}\ud V_{g}\\
			& =\int_{\psi_{i}^{-1}(B_{r_{0}}(x_{i}))}e^{-3\widetilde{u}_{i}}\ud V_{g_{i}},
		\end{align*}
		thus 
		\begin{equation}\label{volume}
		\underline{\lim}_{i\rightarrow\infty}\mathrm{vol}(e^{-2u_{i}}g)\ge \mathrm{vol}(\mathbb{S}^{3}).
		\end{equation}
		As  $\sigma_{2}^{1/2}(g_{u_{i}}^{-1}A_{g_{u_{i}}})\ge\sigma_{2}^{1/2}(g_{\mathbb{S}^{3}}^{-1}A_{\mathbb{S}^{3}})$ for large $i$,
		\eqref{halfvolume} or  \eqref{volume} contradicts the assumption (\ref{upper bound of conformal invariant})
		
		\emph{Step B}: We prove that $\sup_{M}u\le C.$ The proof is identical to that of S. Chen \cite{Chen} and  $\sup_{M}u$
		can be finished by the local gradient estimate  by Jin-Li-Li \cite{Jin-Li-Li}. 
	\end{proof}
	
	The existence part can be deduced from the standard
	argument in S. Chen \cite{Chen}, Gursky-Viaclovsky \cite{GV0}.  Consequently, we prove Theorem \ref{thm:three-double normal}.

\end{document}